\documentclass[12pt]{amsart}
\usepackage{amssymb,amsmath,amsfonts,latexsym}
\usepackage{bm}
\usepackage[all,cmtip]{xy}
\usepackage{amscd}

\newcommand{\bea}{\begin{eqnarray}}
\newcommand{\eea}{\end{eqnarray}}

\def\textmatrix#1&#2\\#3&#4\\{\bigl({#1 \atop #3}\ {#2 \atop #4}\bigr)}
\def\dispmatrix#1&#2\\#3&#4\\{\left({#1 \atop #3}\ {#2 \atop #4}\right)}
\newcommand{\be}{\begin{equation}}
\newcommand{\ee}{\end{equation}}
\newcommand{\ben}{\begin{eqnarray*}}
\newcommand{\een}{\end{eqnarray*}}

\newcommand{\bi}{\begin{itemize}}
\newcommand{\ei}{\end{itemize}}

\newtheorem{Theorem}{\sc Theorem}[section]
\newtheorem{Lemma}[Theorem]{\sc Lemma}
\newtheorem{Proposition}[Theorem]{\sc Proposition}
\newtheorem{Corollary}[Theorem]{\sc Corollary}
\theoremstyle{definition}

\theoremstyle{plain}

\newtheorem{thm}{Theorem}[section]

\theoremstyle{definition}

\numberwithin{equation}{section}

\let\phi=\varphi
\begin{document}
\title[Improved Discrete Dual $p$-Hardy and Weighted Discrete $p$- Birman Inequalities]
{Improved Discrete Dual $p$-Hardy and Weighted Discrete $p$-Birman Inequalities}

\author[Das]{Bikram Das}
\address{Indian Institute of Science, Bengaluru-560012, Karnataka, India.}
\email{dasb23113@gmail.com}

\subjclass[2010]{Primary 26D15; Secondary 26D10.}
\keywords{Discrete Hardy Inequality, Dual Hardy Inequality, Improvement,  Weighted Birman Inequality, Multi-variable Series}

\begin{abstract}
In this paper, we establish a new version of one dimensional generalized discrete dual $p$-Hardy inequality with a shift. Using this generalized discrete dual p-Hardy inequality, we obtain  improvements of two discrete dual $p$-Hardy inequalities. To be specific, for $p>1$ and $A\in C_c(\mathbb{N}_{0})$ satisfying $A_{0}=A_{1}=0$, we first improve the discrete dual p-Hardy inequality
\begin{align*}
&\displaystyle\sum_{n=2}^{\infty}(n-1)^{p}| A_{n}-A_{n-1}|^{p}\geq\frac{1}{p^{p}}\displaystyle\sum_{n=2}^{\infty}|A_{n}|^{p},
\end{align*} where the associate constant term is sharp. Subsequently, we improve its power-type weighted discrete dual p-Hardy extension \\
\begin{align*}
&\displaystyle\sum_{n=2}^{\infty}(n-1)^{\alpha}|A_{n}-A_{n-1}|^{p}\geq\Big(\frac{\alpha+1-p}{p}\Big)^{p}
\displaystyle\sum_{n=2}^{\infty}\frac{|A_{n}|^{p}}{n^{p-\alpha}}
\end{align*} for $p-1<\alpha\leq p$, where the associated constant term is also sharp. We also establish a discrete $p$- Birman inequality with  power weights.  Furthermore, we establish a multivariable dual $p$-Hardy inequality with a sharp constant. The proof proceeds by first establishing the inequality for two variables and then extending the argument to multiple variables, while preserving the sharpness of the constant.
\end{abstract}
\maketitle

\section{Introduction}{\label{secint}}
The development of Hardy-type inequalities has its roots in mathematical work from the beginning of the twentieth century. On dated 21st June, 1921, G.H. Hardy received a letter by E. Landau containing a proof of the following famous discrete Hardy inequality:\\
Let $p>1$ be a real number and $a=\{a_n\}$ be a sequence of real numbers, then the classical discrete $p$-Hardy inequality ( \cite{GHYLITTLE}, Theorem 326) in one dimension asserts that
\begin{align}{\label{pDHI}}
&\displaystyle\sum_{n=1}^{\infty}a_{n}^{p}\geq\Big(\frac{p-1}{p}\Big)^{p}\displaystyle\sum_{n=1}^{\infty}\Big(\frac{a_{1}+a_{2}\cdots+a_{n}}{n}\Big)^{p},
\end{align} where the associated constant term $\Big(\frac{p-1}{p}\Big)^{p}$ is sharp and equality holds if and only if $a_n=0$ for all $n\in \mathbb{N}$. Since the inequality was first stated by Hardy, it is commonly referred to as Hardy inequality \cite{AKR}. The origin of the inequality is closely related to Hardy's investigations of Hilbert's theorem. In 1918, Hardy was concerned with finding a short and elegant proof of Hilbert's theorem on the convergence of double series \cite{GHYNOTE}. Although the inequality was not formulated explicitly in that work, the essential reasoning required for its proof can already be found in Hardy's argument. The development of the inequality was not limited to the contributions of G. H. Hardy himself (\cite{GHYNOTE,GHYNOTE251}).
  Several prominent mathematicians, including E. Landau, G. $P\acute{o}lya$, M. Riesz, and I. Schur, made important contributions to its subsequent development. In particular, Landau's (\cite{ELU, LANDAU}) work  played a significant role in establishing inequality (\ref{pDHI}). Owing to these contributions, the result is also referred to in the literature as the Hardy-Landau inequality.
 Let $A=\{A_n\}\in C_{c}(\mathbb{N}_0)$ with $A_0=0$, where $C_{c}(\mathbb{N}_0)$ is a space of all finitely supported functions on $\mathbb{N}_0:=\{0,1,2,...\}$, then the inequality (\ref{pDHI}) takes the equivalent form
\begin{align}{\label{EFpDHI}}
&\displaystyle\sum_{n=1}^{\infty}|A_{n}-A_{n-1}|^{p}\geq\big(\frac{p-1}{p}\big)^{p}\displaystyle\sum_{n=1}^{\infty}\frac{|A_n|^{p}}{n^{p}}.
\end{align}
 Although Hardy's inequality has been extensively studied in the continuous setting (see \cite{BEL}, \cite{OK}, \cite{RFR}, \cite{GM}, and the references therein), its discrete analogue remains comparatively less explored. The discrete setting presents several additional challenges. One of the principal difficulties arises from the breakdown of the usual calculus in the discrete setting. \\ A remarkable breakthrough in this direction was achieved by Keller, Pinchover, and Pogorzelski in 2018 \cite{MKR} (see also \cite{MKRGRAPH}). They showed that, although the classical Hardy constant $\frac{1}{4}$ is sharp, the corresponding weight $\frac{1}{4n^{2}}$ is not optimal. In fact, they proved the following improved inequality
\begin{align}{\label{IMHI2}}
\displaystyle\sum_{n=1}^{\infty}| A_{n}-A_{n-1}|^{2}&\geq\displaystyle\sum_{n=1}^{\infty} w_n^{KPP}|A_n|^{2},
\end{align} where weight sequence $w_n^{KPP}=2-\Big({1-\frac{1}{n}}\Big)^{1/2}-\Big({1+\frac{1}{n}}\Big)^{1/2}>\frac{1}{4n^2}=w_n^{H}$, $n\in \mathbb{N}$.\\
The classical discrete Hardy inequality has been the subject of extensive research in recent years, particularly with regard to its improvement and generalization. This ongoing development has led to several improved forms and broader extensions. Notably, \cite{DKALFS} obtained generalized versions of the improved discrete Hardy inequality (\ref{IMHI2}). Subsequently, the results were further advanced in \cite{DASMAN}, where (\ref{IMHI2}) was extended in a more general setting, encompassing the earlier generalization presented in \cite{DKALFS}. Further contributions in this area were made by  Krej\v{c}i\v{r}\'{i}k and \v{S}tampach \cite{DKK}, and Gerhat et al. \cite{BGDKFS}, where different forms of improved inequalities were derived through elementary arguments and suitable factorization techniques. Related developments can also be found in \cite{HUANGYE}, which considers improvements of the Hardy-Rellich inequality, particularly in the higher-order setting.\\A further significant extension was obtained by Fischer, Keller, and Pogorzelski \cite{FFR}. Their result generalizes the previously established inequality (\ref{IMHI2}) for every real parameter $p>1$. More precisely, they proved that
\begin{align}{\label{one Improve}}
\displaystyle\sum_{n=1}^{\infty}|A_{n}-A_{n-1}|^{p}&\geq\displaystyle\sum_{n=1}^{\infty} w_n^{FKP}(p)|A_n|^{p},
\end{align}where for each $n\in \mathbb{N}$, the improved weight sequence $w_n^{FKP}(p)$ is defined by
\begin{center}
$w_n^{FKP}(p)=\big(1-(1-\frac{1}{n})^{\frac{p-1}{p}}\big)^{p-1}-\big((1+\frac{1}{n})^{\frac{p-1}{p}}-1\big)^{p-1}>\big(\frac{p-1}{p}\big)^{p}\frac{1}{n^{p}}$.
\end{center}
More recently, the authors of \cite{HUANGYE} established a generalized improved discrete Hardy inequality. In particular, they obtained the following identity in \cite{HUANGYE}:
\begin{align}{\label{GIMHI}}
\displaystyle\sum_{n=1}^{\infty}V_{n}|\nabla A_{n}|^{2}+\displaystyle\sum_{n=1}^{\infty}\frac{\operatorname{div}\left(V(\nabla\mu)\right)_{n}}{\mu_{n}}|A_n|^{2}=\displaystyle\sum_{n=2}^{\infty}V_{n}\Big|\sqrt{\frac{\mu_{n-1}}{\mu_{n}}}A_{n}-\sqrt{\frac{\mu_{n}}{\mu_{n-1}}}A_{n-1}\Big|^{2},
\end{align} where, $\mu=\{\mu_n\}$ are strictly positive sequence of real numbers, $V_{n}\geq0$, and the discrete gradient and divergence are defined respectively by $\nabla\mu_{n}:=\mu_{n}-\mu_{n-1}$ and $\operatorname{div}h_{n}:=h_{n+1}-h_{n}$, for $n\geq1$.\\
A recent contribution to this area was presented by \v{S}tampach and Waclawek \cite{FS26}, who further developed the method introduced in \cite{HUANGYE} and extended it to the general range $1<p<\infty$.\\
 More precisely, suppose that $p>1$, $V_{n}\geq0$ for every $n\in\mathbb{N}$, and let $\mu$ be a positive non-decreasing sequence satisfying $\mu_{n+1}\geq\mu_{n}>0$, $n\in\mathbb{N}$ with $\mu_{0}=0$. Then, for every complex-valued sequence $A\in C_c(\mathbb{N}_{0})$ satisfying $A_{0}=0$, they established the estimate
\begin{align}\label{SW26}
\sum_{n=1}^{\infty}V_{n}|\nabla A_{n}|^p\geq-\sum_{n=1}^{\infty}\frac{\operatorname{div}\left(V(\nabla\mu)^{p-1}\right)_{n}}{\mu_{n}^{p-1}}|A_{n}|^p.
\end{align}
This result provides a unified formulation of the corresponding weighted inequality for all $p>1$. For $p=2$, the inequality (\ref{SW26}) reduces to (\ref{GIMHI}).
\subsection{Improved Discrete Hardy Inequalities with Power Weights}
The weighted discrete Hardy inequality was introduced in the work of Copson \cite{ECN2} as a weighted counterpart of the classical inequality (\ref{EFpDHI}). In particular, by taking $\lambda_{n}=1$ for every $n\in\mathbb{N}$ and $p-c=\alpha$, where $1<c\leq p$, in Theorem 1.1 of \cite{ECN2}, we have the following special case of Copson inequality in its equivalent form, given by
\begin{align}{\label{COPIPAR}}
\displaystyle\sum_{n=1}^{\infty}{n}^{\alpha}|A_{n}-A_{n-1}|^{p}&\geq\Big( \frac{p-\alpha-1}{p}\Big)^{p}\sum_{n=1}^{\infty}\frac{|A_{n}|^{p}}{n^p}{n}^{\alpha},~~\alpha\in [0, p-1),
\end{align}where $\{A_n\}$ is a sequence of complex numbers and the associated constant term is sharp.
 This particular case was considered earlier by Hardy and Littlewood \cite{HL1927}, who obtained a power-type weighted inequality but did not identify the exact optimal constant. The sharp constant was subsequently established by Copson \cite{ECN2}; see also Bennett \cite[Corollary 3]{BENN} for a related formulation. In a recent work,  Gupta \cite{SGA} investigated the corresponding power-type discrete Hardy inequality (\ref{COPIPAR}) for case $p=2$ and achieved an improvement for $\alpha\in \{0\}\cup[\frac{1}{3}, 1)$. It is worth noting that the inequality (\ref{COPIPAR}) for case $p=2$ remains valid for $\alpha\geq0$ and was established in \cite{HUANGYE}. More recently, \v{S}tampach and Waclawek \cite{FS26} proved that the inequality (\ref{COPIPAR}) holds for all $\alpha<0$, provided that a slightly smaller weight is used on the right-hand side. In fact, the authors in \cite{FS26} proved the inequality  as below:\\
If $p>1$ and $A\in C_c(\mathbb{N}_{0})$ with convention $A_{0}=0$, then for all $\alpha<0$, we have
\begin{align}{\label{ECOPIPAR}}
\displaystyle\sum_{n=1}^{\infty}n^{\alpha}|A_{n}-A_{n-1}|^{p}&\geq\Big( \frac{p-\alpha-1}{p}\Big)^{p}\sum_{n=1}^{\infty}(n+1)^{\alpha-p}|A_{n}|^{p},
\end{align} where the attached constant term $\Big( \frac{p-\alpha-1}{p}\Big)^{p}$ is sharp.
\subsection{ Dual p-Hardy Inequalities and Their Improvements}
It is worth noting that relatively few studies have been devoted to improvements of the discrete dual Hardy inequality. To place this problem in the paper, we briefly recall its classical development. The study of dual Hardy inequalities originated with the work of Hardy \cite{GHYNOTE1919} (see also \cite{GHYNOTE254}) in 1919.\\  For any $p>1$ and any non-negative sequence $\{a_n\}$, the following discrete dual $p$-Hardy inequality holds:
\begin{align}{\label{VHARDYI}}
&\displaystyle\sum_{n=1}^{\infty}\Big|\displaystyle\sum_{k=n}^{\infty}\frac{a_{k}}{k}\Big|^{p}\leq p^{p}\displaystyle\sum_{n=1}^{\infty}|a_{n}|^{p},~~ i.e~ \displaystyle\sum_{n=1}^{\infty}\Big|\displaystyle\sum_{k=n}^{\infty}a_{k}\Big|^{p}\leq p^{p}\displaystyle\sum_{n=1}^{\infty}n^{p}|a_{n}|^{p}
\end{align} where the associated constant term `$p^{p}$' is sharp, and equality holds if and only if $a_{n}=0$ for all $n\in\mathbb{N}$. Hardy \cite{GHYNOTE1919} first established the dual form (\ref{VHARDYI}) of (\ref{pDHI}) in 1919 for $p=2$, and Copson \cite{ECN} subsequently generalized it to $p>1$. Consequently, this inequality is sometimes referred to as the Hardy-Copson inequality ( see \cite{AKR}). The duality between (\ref{VHARDYI}) and (\ref{pDHI}) was first pointed out by Hardy \cite{GHYNOTE254}. The \emph{dual} inequality (\ref{VHARDYI}) is also called as discrete \emph{variant} Hardy's inequality. By choosing $A_n=\displaystyle\sum_{k=n}^{\infty}a_{k}$, inequality (\ref{VHARDYI}) can be equivalently written as
\begin{align}{\label{VHARDYIP2}}
&\displaystyle\sum_{n=2}^{\infty}(n-1)^{p}|A_{n}-A_{n-1}|^{p}\geq\frac{1}{p^{p}}\displaystyle\sum_{n=2}^{\infty}|A_{n}|^{2},
\end{align}where $\{A_n\}$ is a sequence of complex numbers such that $A_{0}=A_{1}=0$.
The inequality (\ref{VHARDYIP2}) was subsequently generalized to a broader class of power weights. More precisely, for $\alpha\in(p-1, p]$, the discrete dual Hardy inequality takes the form
\begin{align}{\label{DHI}}
\displaystyle\sum_{n=2}^{\infty}(n-1)^{\alpha}|A_{n}-A_{n-1}|^{p}&\geq\Big(\frac{1+\alpha-p}{p}\Big)^{p}\displaystyle\sum_{n=2}^{\infty}\frac{|A_{n}|^{2}}{n^2}n^{\alpha},~~ A_{0}=A_{1}=0,
\end{align}where the associated constant term $\Big(\frac{1+\alpha-p}{p}\Big)^{p}$ is sharp. Such weighted extensions were investigated by Copson (\cite{ECN, ECN2}), who established the corresponding inequalities together with their optimal constants.\\The improvement of the dual Hardy inequality (\ref{VHARDYIP2}) was first established in \cite{DASMAN3} for the case $p=2$. In fact, they established  the following inequality:
\begin{align}{\label{VHRDYalpha2}}
&\displaystyle\sum_{n=2}^{\infty}(n-1)^{2}|A_{n}-A_{n-1}|^{2}\geq\displaystyle\sum_{n=2}^{\infty}\beta_{n}|A_{n}|^{2}, A_{0}=A_{1}=0,
\end{align}where the weight sequence $\beta_n$ for $n\geq 2$ is given by
\begin{align*}
\beta_{n}&=n^{2}\Big[1+\Big(1-\frac{1}{n}\Big)^{2}-\Big(1+\frac{1}{n}\Big)^{-\frac{1}{2}}-\Big(1-\frac{1}{n}\Big)^{\frac{3}{2}}\Big].
\end{align*}  Moreover, $\beta_{n}>\frac{1}{4}$ for $n\geq 2$ . Subsequently, the authors in \cite{DASMANTON} obtained a corresponding improvement for the weighted dual Hardy inequality (\ref{DHI}), again in the case $p=2$. In fact the authors of \cite{DASMANTON} established the following improved dual Hardy inequality
\begin{align}{\label{BETAnALPHA}}
\displaystyle\sum_{n=2}^{\infty}(n-1)^{\alpha}|A_{n}-A_{n-1}|^{2}&\geq\displaystyle\sum_{n=2}^{\infty}\beta_{n}(\alpha)|A_{n}|^{2}, ~~A_0=A_1=0,
\end{align}
where $\alpha\in(1, 2]$, and the improved weight sequence  $\beta_{n}(\alpha)$ is given by
\begin{align*}
\beta_{n}(\alpha)& =n^{\alpha}\Big[1+\Big(1-\frac{1}{n}\Big)^{\alpha}-\Big(1-\frac{1}{n}\Big)^{\frac{1+\alpha}{2}}
-\Big(1+\frac{1}{n}\Big)^{\frac{1-\alpha}{2}}\Big], ~n\geq 2.
\end{align*}.
 Furthermore, the following estimate holds for all $n\ge2$:
\begin{align*}
&\beta_n(\alpha)>\frac{(\alpha-1)^{2}}{4}n^{\alpha-2},~~\alpha\in(1, 2].
\end{align*}In addition, it is shown that the improved weight sequence $\beta_{n}(\alpha)$ is critical.While improvements of Hardy inequalities have been extensively studied for $p>1$, the corresponding improvement problem for dual Hardy inequalities appears to remain unexplored. The present work is devoted to this problem. We initiate the study of improved dual Hardy inequalities and obtain corresponding improvements for every $p>1$. Our investigation is therefore motivated by the following question:\\

\textsf{Q(a) Do the discrete dual Hardy inequality (\ref{VHARDYIP2}) and its power-weighted version (\ref{DHI}) admit further improvements for every $p>1$ ?}\\
\subsection{Discrete Birman Inequalities}
In 1961, Birman \cite{BM} established a higher-order extension of Hardy's inequality by considering derivatives of arbitrary order in the case $p=2$.
%In 1961, Birman \cite{BM} generalized the case $p=2$ of continuous version of the classical discrete Hardy inequality (\ref{EFpDHI}) to derivatives of arbitrary order.
More precisely, for $\phi\in C_{0}^{\infty}\mathbb{(R_{+})}$ and $l\in\mathbb{N}$,
\begin{align}{\label{CBMIp2}}
&\int_{0}^{\infty}|\phi^{(l)}(x)|^{2}dx\geq\Big(\frac{1}{2}\Big)^{2}_{l}\int_{0}^{\infty}\frac{|\phi(x)|^{2}}{x^{2l}}dx,
\end{align}where $(a)_{l}= a(a + 1)\dots(a + l-1)$ denotes Pochhammer symbol.
%A proof of (\ref{CBMIp2}) can be found in [\cite{GM}, pp. 83.84]; see also the recent paper\cite{FGLWR} for other proofs 	
%and generalizations. The particular case $p = 2 $ of (\ref{CBMIp2}) was discovered earlier by Rellich \cite{RFH}, 	
%even in higher dimensions, and is commonly referred to as the Rellich inequality.
The proof of this inequality and its various extensions have been considered in (\cite{ FGLWR, GM}); the case $l=2$ is the well-known Rellich inequality \cite{RFH}.
If $p>1$, $l\in\mathbb{N}$, then for $\phi\in C_{0}^{\infty}\mathbb{(R_{+})}$, the continious $p$-Birman inequality is given as follows:
\begin{align}{\label{CbirmanP}}
&\int_{0}^{\infty}|\phi^{(l)}(x)|^{p}dx\geq\Big(\frac{p-1}{p}\Big)^{p}_{l}\int_{0}^{\infty}\frac{|\phi(x)|^{p}}{x^{pl}}dx.
\end{align}
To the best of our knowledge, no explicit reference for the general form of (\ref{CbirmanP}) is available in the literature. The case $l=2$, corresponding to the $p$-Rellich inequality, can be found in \cite{Owen}. The continuous $p$-Birman inequality can also be obtained iteratively from the weighted form of (\ref{CBMIp2}) established in \cite[Theorem 2]{KMP} and \cite[Theorem 330]{GHYLITTLE52}. The inequality (\ref{CbirmanP}) is studied in \cite{FS24} for particular case. More recently, \v{S}tampach and Waclawek \cite{FS26} gave a detailed proof of (\ref{CbirmanP}) and surprisingly,  established the discrete $p$-Birman inequality with the sharp constant.\\
If $p>1$,  $A\in C_c(\mathbb{N}_0)$ satisfy  $A_{n}=0$ for $n<l$, where $l\in\mathbb{N}$, then for all $n\geq l$, $n\in\mathbb{N}_0$, the following inequality holds.
\begin{align}{\label{PBirman}}
&\displaystyle\sum_{n=l}^{\infty}|\nabla^{l} A_{n}|^{p}\geq\Big(\frac{p-1}{p}\Big)_{l}^{p}\displaystyle\sum_{n=l}^{\infty}\frac{|A_{n}|^{p}}{n^{lp}}.
\end{align} The above inequality (\ref{PBirman}) was first deduced for p = 2 by Huang and Ye in \cite{HUANGYE}. For $l=1$, inequality (\ref{PBirman}) reduces to the classical Hardy inequality (\ref{EFpDHI}), which admits the power-type weighted extension (\ref{COPIPAR}). This naturally raises the question of whether a similar power-weighted extension of the $p$-Birman inequality exists in the discrete setting for every $l\in\mathbb{N}$. As far as we are aware,, this problem has not yet been addressed. Thus, we pose the following question:\\

\textsf{Q(b) Does the discrete $p$-Birman inequality (\ref{PBirman}) admit a power-weighted extension?}

\subsection{Hardy Inequalities in Several Variables}
The extension of discrete Hardy inequalities from one variable to several variables has been extensively studied. Pachpatte \cite{PACH} initiated the study of multivariable discrete $p$-Hardy inequalities, and Salem et al.\cite{SZR} subsequently established sharp forms for double sequences and, more generally, for $r$-fold sequences. To be specific, for $r$ variables $m_{1},m_{2},\ldots m_{r}$ , the multivariable discrete Hardy inequality takes the form \\
\begin{align}{\label{r fold inequality}}
&\displaystyle\sum_{m_{1}=1}^{\infty}\displaystyle\sum_{m_{1}=1}^{\infty}\ldots\displaystyle\sum_{m_{r}=1}^{\infty}\Big|\displaystyle\sum_{i_{1}=1}^{m_{1}}\displaystyle\sum_{i_{2}=2}^{m_{2}}\ldots\displaystyle\sum_{i_{r}=1}^{m_{r}}a_{i_{1}i_{2}\ldots i_{r}}\Big|^{p}\nonumber\\
&\leq (\frac{p}{p-1})^{pr}\displaystyle\sum_{m_{1}=1}^{\infty}\displaystyle\sum_{m_{2}=1}^{\infty}\ldots\displaystyle\sum_{m_{r}=1}^{\infty}|a_{m_{1}m_{2}\ldots m_{r}}|^{p},
\end{align}where the associated constant term $(\frac{p}{p-1})^{pr}$ is sharp. The corresponding theory for discrete $p$-dual Hardy inequalities remains considerably less developed. In particular, while  several variables extensions of the discrete $p$-Hardy inequality are well established, the question of whether an analogous extension exists for its dual counterpart remains open. This naturally motivates us to investigate the several variables structure of the discrete $p$-dual Hardy inequality.  This leads to the following question:\\

\textsf{Q(c) Can the discrete dual $p$-Hardy inequality (\ref{VHARDYI}) be extended to two variable case and, more generally, to the multivariable setting?}\\

The primary aim of this paper is to provide answers to the questions Q(a)-Q(c) formulated above. Our approach begins with the introduction of the $m$-shifting discrete gradient and divergence, viewed as difference operators on $\mathbb{N}$.  Using these operators, we derive a generalized $m$-shifting improved discrete dual $p$-Hardy inequality. Appropriate choices of $\mu_{n}$ yield weight sequences that further improve (\ref{VHARDYIP2}) and its power-weighted counterpart (\ref{DHI}) for all $p>1$. Consequently, Q(a) receives a positive answer. We then turn to Q(b), where a power-type weighted discrete $p$-Birman inequality is obtained. For $p=2$ and $l=2$, $p$-Birman inequality subsequently leads to a weighted discrete Rellich inequality. Whether the constant associated with the corresponding weighted Birman inequality is optimal remains a open question. Finally, addressing Q(c), we establish a multidimensional dual Hardy inequality with a sharp constant. The proof is first established in the two-variable case and then extended to the general $k$-variable setting, thus addressing Q(c).\\

The paper is organized as follows. Section 2 establishes a general shifted discrete dual $p$-Hardy inequality. Section 3 develops further improvements of the discrete dual $p$-Hardy inequality and its power-weighted form through suitable choices of $\mu_n$, $n\in\mathbb N$. Section 4 is devoted to a weighted discrete $p$-Birman inequality and its consequence, a weighted discrete Rellich inequality. Finally, Section 5 establishes multiple variables discrete dual $p$ Hardy inequality with a sharp constant.

\section{Generalized Shifted Improved Dual $p$-Hardy Inequality}
We define the $m$-shifting discrete gradient and divergence as difference operators acting on the space of sequence $C_{c}(\mathbb{N}_{0})$. For $m\in\mathbb{N}$, these operators are given by
\begin{align}
\nabla_{m}\phi_{n} &=
 \left\{
\begin{array}{lll}
    \phi_{n} & \quad \mbox{if~~} n=1,2,\ldots m,\\
    %\frac {2\log N-\sqrt\lambda_{n}\log n}{\log N} & \quad \mbox{if~~} N\leq n\leq N^{2}\\
    \phi_{n}-\phi_{n-m} & \quad \mbox{if~~} n> m,
\end{array}
\right.
\end{align} and for $n\geq m$, $\operatorname{div}_{m}\phi_{n}=\phi_{n+m}-\phi_{n}$.\\
We first recall the following lemma, which will be used in the proof of the proposition below. This result is taken from Lemma 2.6 of \cite{RFR}, and we state it here without proof.
\begin{Lemma}\cite{RFR}{\label{lemsinq}}
Let $p>1$, $a\in\mathbb{C}$ and $t\in[0, 1]$. Then,
\begin{align*}
 &(1-t)^{p-1}(\mid a\mid^{p}-t)\leq\mid a-t\mid^{p}.
\end{align*}
\end{Lemma}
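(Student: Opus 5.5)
We must prove: for $p>1$, $a\in\mathbb{C}$, $t\in[0,1]$,
\[
(1-t)^{p-1}\bigl(|a|^p-t\bigr)\le |a-t|^p .
\]

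\emph{Reduction to real $a\ge 0$.} For complex $a$ we have $|a-t|\ge \bigl||a|-t\bigr|$ by the reverse triangle inequality, since $t\ge0$. The left-hand side depends only on $|a|$. So it suffices to prove the claim with $a$ replaced by $x=|a|\ge0$:
\[
(1-t)^{p-1}(x^p-t)\le |x-t|^p .
\]
The cases $t=0$ and $t=1$ are trivial: at $t=0$ both sides equal $x^p$, and at $t=1$ the left side is $0$. If $x^p\le t$, the left side is $\le 0$ and there is nothing to prove. So assume $t\in(0,1)$ and $x^p>t$.

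\emph{Main step: convexity of $s\mapsto|s|^p$.} Write $x$ as a convex combination,
\[
x=(1-t)\cdot\frac{x-t}{1-t}+t\cdot 1 .
\]
Convexity then gives
\[
x^p\le (1-t)\,\Bigl|\frac{x-t}{1-t}\Bigr|^p+t ,
\]
that is,
\[
x^p-t\le (1-t)^{1-p}|x-t|^p .
\]
Multiplying by $(1-t)^{p-1}>0$ yields exactly the desired inequality. This works for every $x\ge0$, without needing the case distinction above.

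\emph{Obstacle.} The only real difficulty is spotting the right convex decomposition, namely the weights $(1-t)$ and $t$ with the points $\frac{x-t}{1-t}$ and $1$. The reduction from complex $a$ to $|a|$ is routine.
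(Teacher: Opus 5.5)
The paper gives no proof of this lemma: it is imported from \cite{RFR} (Lemma 2.6 there) and explicitly stated without proof. There is therefore no in-paper argument to compare yours with.

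Your proof is correct and complete.
\begin{itemize}
\item The reduction is valid. Since $t\ge 0$, you have $|a-t|\ge\bigl|\,|a|-t\,\bigr|$, and the left-hand side depends only on $|a|$.
\item The endpoint $t=1$ is handled correctly. This is exactly where $p>1$ is used, so that $0^{p-1}=0$.
\item For $t\in[0,1)$, the decomposition $x=(1-t)\frac{x-t}{1-t}+t\cdot 1$ together with convexity of $s\mapsto|s|^p$ gives $x^p-t\le(1-t)^{1-p}|x-t|^p$. Multiplying by $(1-t)^{p-1}>0$ yields the claim.
\end{itemize}

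Two minor remarks.

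First, the reduction to $x=|a|$ is unnecessary. The map $z\mapsto|z|^p$ is convex on $\mathbb{C}$, being the composition of the convex function $|\cdot|$ with the increasing convex function $s\mapsto s^p$ on $[0,\infty)$. So the same decomposition $a=(1-t)\frac{a-t}{1-t}+t\cdot 1$ gives the inequality directly for complex $a$.

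Second, strict convexity for $p>1$ shows that, for $t\in(0,1)$, equality holds exactly when $a=1$. In the paper's use of the lemma in Proposition~\ref{p-weightedgeneral}, one takes $t=\mu_n/\mu_{n-m}$ and $a=\chi_{n-m}/\chi_n$. There the termwise estimate is therefore an equality whenever $\chi_{n-m}=\chi_n$, in particular when $A$ is a constant multiple of $\mu$.
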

We begin with the following proposition.
\begin{Proposition}{\label{p-weightedgeneral}}
Let $p>1$ and $V_n\geq0$ for $n\in\mathbb N$. Suppose that ${\mu_n}$, $n\in\mathbb{N}$ is strictly positive sequence such that $\mu_{n}\geq\mu_{n+1}$ for $n\geq 1$ and  $\mu_{0}=0$. Then for every $A\in C_c(\mathbb{ N}_{0})$ satisfying $A_n=0$ for all $n\le m$, the following inequality holds.
\begin{align}
&\displaystyle\sum_{n=m+1}^{\infty}V_{n}|\nabla_{m}A_{n}|^{p}\geq\displaystyle\sum_{n=m+1}^{\infty}\frac{\operatorname{div}_{m}(V(-\nabla_{m}\mu)^{p-1})_{n}}{\mu_{n}^{p-1}}|A_{n}|^{p}.
\end{align}
\end{Proposition}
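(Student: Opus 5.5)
The plan is to rewrite the right-hand side by an $m$-step summation by parts. After that, the inequality splits into one pointwise estimate for each $n>m$, and each of these follows from Lemma \ref{lemsinq} after rescaling.

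\emph{Step 1 (notation and signs).} For $n>m$ we have $-\nabla_m\mu_n=\mu_{n-m}-\mu_n\ge0$, where for $n\le m$ we only use $\mu_0=0$ in the definition of $\nabla_m$. Indeed, for $n>m$ both indices lie in $\mathbb N$ and $\mu$ is non-increasing there. Put
\begin{align*}
W_n:=V_n(\mu_{n-m}-\mu_n)^{p-1}\ge0,\qquad n>m,
\end{align*}
and $t_n:=\mu_n/\mu_{n-m}\in(0,1]$. The right-hand side of the Proposition then reads $\sum_{n>m}(W_{n+m}-W_n)\,|A_n|^p/\mu_n^{p-1}$. All sums are finite because $A$ is finitely supported.

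\emph{Step 2 (summation by parts).} Shift the index in the first part:
\begin{align*}
\sum_{n>m}W_{n+m}\frac{|A_n|^p}{\mu_n^{p-1}}=\sum_{n>2m}W_n\frac{|A_{n-m}|^p}{\mu_{n-m}^{p-1}}
=\sum_{n>m}W_n\frac{|A_{n-m}|^p}{\mu_{n-m}^{p-1}}.
\end{align*}
The last equality holds because $A_{n-m}=0$ for $m<n\le 2m$. Hence the right-hand side equals
\begin{align*}
\sum_{n>m}W_n\Big(\frac{|A_{n-m}|^p}{\mu_{n-m}^{p-1}}-\frac{|A_n|^p}{\mu_n^{p-1}}\Big).
\end{align*}
Since $V_n\ge0$, it suffices to prove, for each $n>m$ with $x=A_{n-m}$ and $y=A_n$, the pointwise inequality
\begin{align*}
|y-x|^p\ge(\mu_{n-m}-\mu_n)^{p-1}\Big(\frac{|x|^p}{\mu_{n-m}^{p-1}}-\frac{|y|^p}{\mu_n^{p-1}}\Big).
\end{align*}

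\emph{Step 3 (pointwise inequality via Lemma \ref{lemsinq}).} This is the only real content of the proof, and the main point is to find the right scaling.

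If $y=0$, the claim reduces to $|x|^p\ge(1-t_n)^{p-1}|x|^p$, which is true because $0\le 1-t_n<1$.

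If $y\neq0$, write $x=y\,a/t$ with $t=t_n$, i.e. $a=tx/y\in\mathbb C$. Dividing both sides by $|y|^p$, the left side becomes $t^{-p}|a-t|^p$. Using $\mu_n=t\mu_{n-m}$, the right side becomes
\begin{align*}
\mu_{n-m}^{p-1}(1-t)^{p-1}\Big(\frac{|a|^p t^{-p}}{\mu_{n-m}^{p-1}}-\frac{1}{t^{p-1}\mu_{n-m}^{p-1}}\Big)=t^{-p}(1-t)^{p-1}\big(|a|^p-t\big).
\end{align*}
Thus the claim is exactly $(1-t)^{p-1}(|a|^p-t)\le|a-t|^p$ with $t\in[0,1]$, which is Lemma \ref{lemsinq}.

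Multiplying the pointwise inequality by $V_n$ and summing over $n>m$ gives the Proposition.
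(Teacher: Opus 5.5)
Your proof is correct and follows the paper's argument. It uses the same substitution $t=\mu_n/\mu_{n-m}$ and $a=tA_{n-m}/A_n$ (the paper's $\chi_{n-m}/\chi_n$ with $A_n=\mu_n\chi_n$) in Lemma \ref{lemsinq}, followed by the same index shift that uses $A_n=0$ for $n\le m$. Your separate treatment of the case $A_n=0$ is a small but worthwhile addition, since the paper's substitution tacitly assumes $\chi_n\neq 0$.
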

\begin{proof}
Let $p>1$ and $V_{n}\geq0$. By lemma \ref{lemsinq}, we have
\begin{align}{\label{Lelambda}}
&V_{n}|a-t|^{p}\geq V_{n}(1-t)^{p-1}(|a|^{p}-t).
\end{align}
 Since $\mu_{n}$ is non-increasing, we have $\mu_{n-m}\geq\mu_{n}$ for $n\geq m+1$, $n, m\in\mathbb{N}$. Hence for $n\geq m+1$, we may choose $t=\frac{\mu_{n}}{\mu_{n-m}}\in(0,1]$ and $a=\frac{\chi_{n-m}}{\chi_{n}}\in\mathbb{C}$, where $\chi_{n}\neq 0$, $m,n\in\mathbb{N}$. Substituting these choices into \eqref{Lelambda} and multiplying both sides by $|\chi_{n}|^{p}\mu_{n-m}^{p}$, we obtain,\\
\begin{align}{\label{Lelambdaput}}
&V_{n}|\mu_{n}\chi_{n}-\mu_{n-m}\chi_{n-m})|^{p}\geq V_{n}(\mu_{n-m}-\mu_{n})^{p-1}\big(|\chi_{n-m}|^{p}\mu_{n-m}-|\chi_{n}|^{p}\mu_{n}\big).
\end{align} We set $A_{n}=\mu_{n}\chi_{n}$, then we have,
\begin{align*}
&\displaystyle\sum_{n=m+1}^{\infty}|\nabla_{m}A_{n}|^{p}\\
=&\displaystyle\sum_{n=m+1}^{\infty}V_{n}|\mu_{n}\chi_{n}-\mu_{n-m}\chi_{n-m}|^{p}\\
\geq&\displaystyle\sum_{n=m+1}^{\infty}V_{n}(\mu_{n-m}-\mu_{n})^{p-1}\big(|\chi_{n-m}|^{p}\mu_{n-m}-|\chi_{n}|^{p}\mu_{n}\big)~~[by~~inequality~ (\ref{Lelambdaput})]\\
=&\displaystyle\sum_{n=m+1}^{\infty}V_{n}(\mu_{n-m}-\mu_{n})^{p-1}\frac{|A_{n-m}|^{p}}{\mu_{n-m}^{p-1}}-\displaystyle\sum_{n=m+1}^{\infty}V_{n}(\mu_{n-m}-\mu_{n})^{p-1}\frac{|A_{n}|^{p}}{\mu_{n}^{p-1}}\\
=&\displaystyle\sum_{n=1}^{\infty}V_{n+m}(\mu_{n}-\mu_{n+m})^{p-1}\frac{|A_{n}|^{p}}{\mu_{n}^{p-1}}-\displaystyle\sum_{n=m+1}^{\infty}V_{n}(\mu_{n-m}-\mu_{n})^{p-1}\frac{|A_{n}|^{p}}{\mu_{n}^{p-1}}\\
=&\displaystyle\sum_{n=m+1}^{\infty}\Big(V_{n+m}(\mu_{n}-\mu_{n+m})^{p-1}\frac{|A_{n}|^{p}}{\mu_{n}^{p-1}}-V_{n}(\mu_{n-m}-\mu_{n})^{p-1}\frac{|A_{n}|^{p}}{\mu_{n}^{p-1}}\Big)\\
=&\displaystyle\sum_{n=m+1}^{\infty}\frac{div_{m}(V(-\nabla_{m}\mu))_{n}}{\mu_{n}^{p-1}}|A_{n}|^{p}.
\end{align*}This completes the proof.
\end{proof}

\begin{Corollary}
As a particular case of Proposition \ref{p-weightedgeneral}, we obtain the improved weighted dual Hardy inequality established in~\cite{DASMANTON}. Indeed, by taking  $m=1$, $p=2$,  $V_{n}=(n-1)^{\alpha}$, $\mu_{n}=n^{\frac{1-\alpha}{2}}$, where $\alpha\in(1,2]$,  the Proposition  \ref{p-weightedgeneral} gives
\begin{align*}
\displaystyle\sum_{n=2}^{\infty}(n-1)^{\alpha}|A_{n}-A_{n-1}|^{2}&\geq\displaystyle\sum_{n=2}^{\infty}\beta_{n}(\alpha)|A_{n}|^{2}, ~~A_0=A_1=0,
\end{align*}
where $\alpha\in(1, 2]$, and the improved weight sequence  $\beta_{n}(\alpha)$ is defined as below:
\begin{align*}
\beta_{n}(\alpha)& =n^{\alpha}\Big[1+\Big(1-\frac{1}{n}\Big)^{\alpha}-\Big(1-\frac{1}{n}\Big)^{\frac{1+\alpha}{2}}
-\Big(1+\frac{1}{n}\Big)^{\frac{1-\alpha}{2}}\Big], ~n\geq 2.
\end{align*}. In particular, for $\alpha=2$, we get the improved dual Hardy inequality established in~\cite{DASMAN3}.
\end{Corollary}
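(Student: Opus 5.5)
The plan is a direct specialization of Proposition \ref{p-weightedgeneral}, followed by an explicit computation of the resulting weight. First I would check the hypotheses for $m=1$, $p=2$, $V_n=(n-1)^{\alpha}$ and $\mu_n=n^{\frac{1-\alpha}{2}}$ with $\alpha\in(1,2]$:
\begin{itemize}
\item $V_n\ge 0$ for all $n\in\mathbb N$.
\item $\mu_n>0$ for $n\ge1$.
\item Since $\frac{1-\alpha}{2}<0$, the sequence $\mu_n$ is strictly decreasing on $\mathbb N$, so $\mu_n\ge\mu_{n+1}$.
\item The convention $\mu_0=0$ never enters, because in the proposition only $\mu_{n-m}=\mu_{n-1}$ with $n\ge 2$ is used.
\item The condition ``$A_n=0$ for $n\le m$'' becomes $A_0=A_1=0$.
\item For $n\ge2$ we have $\nabla_1A_n=A_n-A_{n-1}$, so the left-hand side of the proposition is exactly $\sum_{n\ge2}(n-1)^{\alpha}|A_n-A_{n-1}|^2$.
\end{itemize}

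The main step is computing the weight. With $p=2$ the exponent $p-1$ equals $1$, so for $n\ge2$ the weight is
\begin{align*}
\frac{\operatorname{div}_{1}(V(-\nabla_{1}\mu))_{n}}{\mu_{n}}
=V_{n+1}\Big(1-\frac{\mu_{n+1}}{\mu_n}\Big)-V_n\Big(\frac{\mu_{n-1}}{\mu_n}-1\Big).
\end{align*}
I would then substitute $V_{n+1}=n^{\alpha}$ and $V_n=(n-1)^{\alpha}=n^{\alpha}(1-\frac1n)^{\alpha}$, together with the ratios $\frac{\mu_{n+1}}{\mu_n}=(1+\frac1n)^{\frac{1-\alpha}{2}}$ and $\frac{\mu_{n-1}}{\mu_n}=(1-\frac1n)^{\frac{1-\alpha}{2}}$.

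Next I would factor out $n^{\alpha}$. This gives
\begin{align*}
n^{\alpha}\Big[1-\Big(1+\frac1n\Big)^{\frac{1-\alpha}{2}}-\Big(1-\frac1n\Big)^{\alpha+\frac{1-\alpha}{2}}+\Big(1-\frac1n\Big)^{\alpha}\Big].
\end{align*}
Since $\alpha+\frac{1-\alpha}{2}=\frac{1+\alpha}{2}$, the bracket is exactly the one defining $\beta_n(\alpha)$.

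Finally, setting $\alpha=2$ gives $\mu_n=n^{-1/2}$ and $\beta_n(2)=\beta_n$, which recovers the inequality of \cite{DASMAN3}.

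There is no real obstacle. The only points needing care are bookkeeping:
\begin{itemize}
\item The index shift in the proposition uses $A_1=0$, which is why the summation starts at $n=2$.
\item The $\frac{1}{\mu_n}$ factor must be distributed correctly so that the exponents combine into $\frac{1+\alpha}{2}$.
\item The proof of the proposition as written ends with the weight $\operatorname{div}_m(V(-\nabla_m\mu))$, without the power $p-1$. For $p=2$ this agrees with the statement, so the specialization is unaffected.
\end{itemize}
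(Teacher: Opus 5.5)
Your proposal is correct and follows the paper's route: the paper presents the corollary as a direct specialization of Proposition \ref{p-weightedgeneral} with $m=1$, $p=2$, $V_n=(n-1)^{\alpha}$, $\mu_n=n^{\frac{1-\alpha}{2}}$. Your explicit computation of the weight $V_{n+1}(1-\mu_{n+1}/\mu_n)-V_n(\mu_{n-1}/\mu_n-1)$, the exponent identity $\alpha+\frac{1-\alpha}{2}=\frac{1+\alpha}{2}$, and your remark that the missing power $p-1$ in the proposition's proof is harmless when $p=2$ correctly fill in details the paper leaves implicit.
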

\section{Improvements of Discrete Dual $p$-Hardy Inequalities}
In this section, we prove that discrete dual $p$ Hardy inequalities (\ref{VHARDYIP2}) and its power-weighted version (\ref{DHI}) admit further improvements for every $p>1$. we first begin with the following theorem, which gives improvement of (\ref{VHARDYIP2}).
\begin{Theorem}{\label{Ptheoremdual}}
Let $p>1$ and $A\in C_c(\mathbb{N}_{0})$ with assumptions $A_{0}=A_{1}=0$, then the following inequality holds for all $n\geq2$, $n\in\mathbb{N}$.
\begin{align}{\label{IMICO}}
&\displaystyle\sum_{n=2}^{\infty}(n-1)^{p}|\nabla A_{n}|^{p}\geq\displaystyle\sum_{n=2}^{\infty}\hat{w_{n}}|A_{n}|^{p},
\end{align}
where the sequence $\hat{w}_{n}$ is defined as below:
\begin{align*}
&\hat{ w}_{n}=n^{p}\big(\frac{1}{pn+1}\big)^{p-1}-(n-1)^{p}\big(\frac{1}{pn-p}\big)^{p-1}>\frac{1}{p^p},~~n\geq2.
\end{align*}
\end{Theorem}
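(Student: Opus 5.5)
The plan is to obtain the theorem as a direct specialization of Proposition \ref{p-weightedgeneral}. I take $m=1$, so that $\nabla_1=\nabla$ and the condition $A_n=0$ for $n\le m$ becomes $A_0=A_1=0$. I take the weight $V_n=(n-1)^p$. For $m=1$ the weight produced by the proposition is
\begin{align*}
\frac{\operatorname{div}(V(-\nabla\mu)^{p-1})_n}{\mu_n^{p-1}}
= n^{p}\Big(\frac{\mu_n-\mu_{n+1}}{\mu_n}\Big)^{p-1}-(n-1)^{p}\Big(\frac{\mu_{n-1}-\mu_n}{\mu_n}\Big)^{p-1}.
\end{align*}
It therefore suffices to choose a positive non-increasing $\mu$ satisfying two conditions:
\begin{align*}
\frac{\mu_n-\mu_{n+1}}{\mu_n}=\frac{1}{pn+1},\qquad \frac{\mu_{n-1}-\mu_n}{\mu_n}=\frac{1}{p(n-1)}.
\end{align*}

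The first condition says $\mu_{n+1}/\mu_n=pn/(pn+1)$. The second says $\mu_{n-1}/\mu_n=(p(n-1)+1)/(p(n-1))$, which is the same recursion shifted by one index, so the two requirements are compatible. I therefore define $\mu_1=1$ and
\begin{align*}
\mu_n=\prod_{k=1}^{n-1}\frac{pk}{pk+1},\qquad n\ge 2.
\end{align*}
This sequence is strictly positive and strictly decreasing. The value $\mu_0$ never enters the computation, because the sums start at $n=2$ and only use $\mu_{n-1}$ with $n-1\ge1$. Plugging this $\mu$ into Proposition \ref{p-weightedgeneral} yields \eqref{IMICO} with weight exactly $\hat w_n$.

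It remains to prove $\hat w_n>1/p^p$. First simplify the second term: $(n-1)^p/(p(n-1))^{p-1}=(n-1)/p^{p-1}$. The claim is then equivalent to
\begin{align*}
\frac{n^p}{(pn+1)^{p-1}}>\frac{p(n-1)+1}{p^p}.
\end{align*}
Set $y=pn$ and multiply through by $p^p(pn+1)^{p-1}$. This turns the claim into $y^p>(y+1)^{p-1}(y+1-p)$. With $u=1/y$, it becomes
\begin{align*}
(1+u)^{p-1}\bigl(1-(p-1)u\bigr)<1.
\end{align*}
Here $0<(p-1)u=(p-1)/(pn)<1$, so both factors are positive and we may take logarithms. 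The strict inequality $\ln(1+x)<x$ for $x>0$ gives
\begin{align*}
(p-1)\ln(1+u)+\ln\bigl(1-(p-1)u\bigr)<(p-1)u-(p-1)u=0,
\end{align*}
which is exactly the required bound.

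The main obstacle is recognizing that both weight factors can be matched by a single product-type $\mu_n$; this comes down to the consistency check of the two recursions above. The lower bound itself is an elementary convexity estimate. One technical point needs care: the proof of Proposition \ref{p-weightedgeneral} divides by $\chi_n=A_n/\mu_n$. Where $\chi_n=0$, inequality \eqref{Lelambdaput} should be checked directly; it reduces to $V_n\mu_{n-m}^p|\chi_{n-m}|^p\ge V_n(\mu_{n-m}-\mu_n)^{p-1}\mu_{n-m}|\chi_{n-m}|^p$, which holds since $\mu_{n-m}-\mu_n\le\mu_{n-m}$. Hence no extra assumption on $A$ is needed.
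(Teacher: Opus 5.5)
Your proposal is correct and is essentially the paper's proof: your product $\mu_n=\prod_{k=1}^{n-1}\frac{pk}{pk+1}$ equals $\Gamma(1+\frac{1}{p})\,\Gamma(n)/\Gamma(n+\frac{1}{p})$, a constant multiple of the paper's choice (the constant cancels in the weight), inserted into Proposition \ref{p-weightedgeneral} with $m=1$ and $V_n=(n-1)^p$. Your logarithmic proof of $(1+u)^{p-1}(1-(p-1)u)<1$ is the same Bernoulli-type bound the paper gets from the convexity of $G(x)=(1+\frac{x}{p})^{1-p}-(1-x)$; note that you also use $\ln(1+x)\le x$ at the negative point $x=-(p-1)u$, which is fine since that inequality holds for all $x>-1$, and your separate check of the case $\chi_n=0$ closes a small gap the paper leaves in the proof of the Proposition.
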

\begin{proof}
 Let $p>1$ . We introduce  the sequences as follows.
 \begin{align}
V_{n} &=
 \left\{
\begin{array}{lll}
    0 & \quad \mbox{if~~} n=1,\\
    %\frac {2\log N-\sqrt\lambda_{n}\log n}{\log N} & \quad \mbox{if~~} N\leq n\leq N^{2}\\
    (n-1)^{p} & \quad \mbox{if~~} n\geq2,
\end{array}
\right.
&
\mu_{n} &=
\left\{
\begin{array}{lll}
    0 & \quad \mbox{if~~} n=0,\\
    %\frac {2\log N-\sqrt\lambda_{n}\log n}{\log N} & \quad \mbox{if~~} N\leq n\leq N^{2}\\
   \frac{\Gamma{ (n)}}{\Gamma \big(n+\frac{1}{p}\big)}  & \quad \mbox{if~~} n\geq1,
\end{array}
\right.
\end{align}where $\Gamma$ is the Euler gamma function.  We see $\mu_{n}$ is decreasing, as $\mu_{n-1}-\mu_{n}=\frac{\Gamma(n)}{p(n-1)\Gamma(n+\frac{1}{p})}>0$ for all $n\geq2$. Therefore, the proposition \ref{p-weightedgeneral} can be applied for this choice of $\mu_{n}$ and $V_{n}$. So, we obtain the inequality for $m=1$ as below:
\begin{align*}
&\displaystyle\sum_{n=2}^{\infty}(n-1)^{p}|\nabla A_{n}|^{p}\geq\displaystyle\sum_{n=2}^{\infty}\hat{ w}_{n}|A_{n}|^{p},
\end{align*} where the improved weight sequence is given by
\begin{align*}
&\hat{ w}_{n}=n^{p}\big(\frac{1}{pn+1}\big)^{p-1}-(n-1)^{p}\big(\frac{1}{pn-p}\big)^{p-1},~~n\geq2.
\end{align*}
To complete the proof of the proposition, it remains to show $\hat {w}_{n}>\frac{1}{p^{p}}$, for all $n\geq2$. Since
$\hat {w}_{n}=\frac{n}{p^{p-1}}G(\frac{1}{n})$, where $G(\frac{1}{n})=(1+\frac{1}{pn})^{1-p}-(1-\frac{1}{n})$, it is enough to show $G(\frac{1}{n})>\frac{1}{pn}$. Let us consider a function $G(x)=(1+\frac{x}{p})^{(1-p)}-(1-x)$ on $x\in(0,\frac{1}{2}]$. It is observed that $G''(x)>0$ on $x\in(0,\frac{1}{2}]$, hence $G(x)$ is convex in $(0,\frac{1}{2}]$. Therefore, one gets $G(x)>G(0)+xG'(0)$. It is also observed that $G(0)=0$ and $G'(0)=\frac{1}{p}$ which implies that $G(x)>\frac{x}{p}$ on $(0,\frac{1}{2}]$. This completes the proof.
\end{proof}

 Motivated by the above construction of suitable $\mu_{n}$ in Theorem \ref{Ptheoremdual}, we next show that the power-weighted discrete dual $p$-Hardy inequality (\ref{DHI}) can be further improved for every $p>1$. To prove this, we first begin with the following lemma.
\begin{Lemma}{\label{Lemmastrictly}}
Let $p>1$. Then, for every $p-1<\alpha\leq p$ and $n\geq2$, $n\in\mathbb{N}$, the following strict inequality holds:
\begin{align*}
\hat{w(\alpha)}_{n}&>\Big(\frac{\alpha+1-p}{p}\Big)^{p}\frac{1}{n^{p-\alpha}},
\end{align*}where $\hat{w}(\alpha)_{n}$ is defined as below:
\begin{align*}
\hat{w(\alpha)}_{n}=n^{\alpha}\big(\frac{\alpha+1-p}{pn+\alpha+1-p}\big)^{p-1}-(n-1)^{\alpha}\big(\frac{\alpha+1-p}{pn-p}\big)^{p-1}.
\end{align*}
\end{Lemma}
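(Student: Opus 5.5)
The plan is to reduce the claim to a one-variable convexity statement, exactly as in the last step of the proof of Theorem \ref{Ptheoremdual}. Set $c=\alpha+1-p$, so that $c\in(0,1]$ under the hypothesis $p-1<\alpha\le p$, and put $x=\frac1n\in(0,\frac12]$ for $n\ge2$.

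First I rewrite the weight. Using $\frac{c}{pn+c}=\frac{c}{p}\cdot\frac{1}{n+c/p}$ and $\frac{c}{pn-p}=\frac{c}{p}\cdot\frac{1}{n-1}$, we get
\begin{align*}
\hat{w(\alpha)}_{n}&=\Big(\frac{c}{p}\Big)^{p-1}\Big[n^{\alpha}\Big(n+\frac{c}{p}\Big)^{1-p}-(n-1)^{\alpha+1-p}\Big]\\
&=\Big(\frac{c}{p}\Big)^{p-1}n^{c}\Big[\Big(1+\frac{c}{pn}\Big)^{1-p}-\Big(1-\frac1n\Big)^{c}\Big].
\end{align*}
The right-hand side of the claim is $\big(\frac{c}{p}\big)^{p}n^{\alpha-p}=\big(\frac{c}{p}\big)^{p}n^{c}\cdot\frac1n$. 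After cancelling the positive factor $\big(\frac{c}{p}\big)^{p-1}n^{c}$, it therefore suffices to prove
\begin{align*}
H(x):=\Big(1+\frac{cx}{p}\Big)^{1-p}-(1-x)^{c}>\frac{c}{p}\,x,\qquad x\in\Big(0,\frac12\Big].
\end{align*}

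Next I check the initial data and the convexity of $H$:
\begin{itemize}
\item $H(0)=0$.
\item $H'(0)=-\frac{c(p-1)}{p}+c=\frac{c}{p}$.
\item $H''(x)=\frac{c^{2}(p-1)p}{p^{2}}\big(1+\frac{cx}{p}\big)^{-p-1}+c(1-c)(1-x)^{c-2}$.
\end{itemize}
In $H''$, the first term is strictly positive because $p>1$ and $c>0$. The second term is nonnegative precisely because $c\le1$; this is where the restriction $\alpha\le p$ enters. Hence $H''>0$ on $[0,\frac12]$, so $H$ is strictly convex there and lies strictly above its tangent line at $0$. That is, $H(x)>H(0)+xH'(0)=\frac{c}{p}x$ for $x\in(0,\frac12]$, which gives the strict inequality.

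The only delicate point is the sign of the second-derivative contribution coming from $-(1-x)^{c}$. It is handled by the range $0<c\le1$. The lower bound $\alpha>p-1$, i.e.\ $c>0$, is needed both for the first term of $H''$ to be strictly positive and for the constant $\big(\frac{\alpha+1-p}{p}\big)^{p}$ to be meaningful. For $\alpha=p$, i.e.\ $c=1$, the argument reduces to the one given in Theorem \ref{Ptheoremdual}.
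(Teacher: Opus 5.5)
Your proposal is correct and follows essentially the same route as the paper: the same factorization $\hat{w(\alpha)}_{n}=(\frac{c}{p})^{p-1}n^{c}T(n)$ with $c=\alpha+1-p$, and the same convexity argument for $R(x)=H(x)-\frac{c}{p}x$, using $R(0)=R'(0)=0$ and $R''>0$ on $(0,\frac12]$. Your values $H'(0)=\frac{c}{p}$ and your formula for $H''$ match the paper's computation, and you correctly pin down where each half of the hypothesis $p-1<\alpha\le p$ is used.
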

\begin{proof}
We can express $\hat{w(\alpha)}_{n}$ in the following form:
\begin{align*}
\hat{w(\alpha)}_{n}&=\Big(\frac{\alpha+1-p}{p}\Big)^{p-1}n^{(\alpha+1-p)}T(n),
\end{align*}where
\begin{align*}
T(n)&=\big(1+\frac{\alpha+1-p}{pn}\big)^{1-p}-(1-\frac{1}{n})^{(\alpha+1-p)}.
\end{align*}To establish this lemma, it suffices to show that $T(n)>\Big(\frac{\alpha+1-p}{p}\Big)\frac{1}{n}$, $n\geq2$, $n\in\mathbb{N}$.
Consider the real-valued function on the interval $(0,\frac{1}{2}]$ given by
\begin{align*}
R(x)&=\big(1+\frac{\alpha+1-p}{p}x\big)^{1-p}-(1-x)^{(\alpha+1-p)}-\Big(\frac{\alpha+1-p}{p}\Big)x.
\end{align*}By successively differentiating $R(x)$ with respect to $x$, we have
\begin{align*}
R'(x)&=(1-p)\Big(\frac{\alpha+1-p}{p}\Big)\big(1+\frac{\alpha+1-p}{p}x\big)^{-p}+(\alpha+1-p)(1-x)^{(\alpha-p)}-\Big(\frac{\alpha+1-p}{p}\Big),
\end{align*}and
\begin{align*}
R''(x)&=p(p-1)\Big(\frac{\alpha+1-p}{p}\Big)^{2}\big(1+\frac{\alpha+1-p}{p}x\big)^{-p-1}+(p-\alpha)(\alpha+1-p)(1-x)^{(\alpha-p-1)}.
\end{align*} As $\alpha\in(p-1,p]$, we have $(\alpha+1-p)>0$ and $(p-\alpha)\geq0$. Consequently, $R''(x)>0$ for all $x\in(0,\frac{1}{2}]$. It is observed that $R(0)=R'(0)=0$. Hence $R(x)>0$ for $x\in(0,\frac{1}{2}]$. This shows that $R(\frac{1}{n})=\big(T(n)-\Big(\frac{\alpha+1-p}{p}\Big)\frac{1}{n}\big)>0$ which establishes the desired inequality.
\end{proof}
\begin{Theorem}
Let $p>1$ and $p-1<\alpha\leq p$. Suppose that $A\in C_c(\mathbb{N}_{0})$ satisfies $A_0=A_1=0$. Then, for every $n\geq2$, $n\in\mathbb{N}$, the following inequality holds:
\begin{align}{\label{GDIMITheorem}}
&\displaystyle\sum_{n=2}^{\infty}(n-1)^{\alpha}|\nabla A_{n}|^{p}\geq\displaystyle\sum_{n=2}^{\infty}\hat {w(\alpha)}_{n}|A_{n}|^{p},
\end{align}where
\begin{align*}
&\hat{w(\alpha)}_{n}=n^{\alpha}\big(\frac{\alpha+1-p}{pn+\alpha+1-p}\big)^{p-1}-(n-1)^{\alpha}\big(\frac{\alpha+1-p}{pn-p}\big)^{p-1}>\Big(\frac{\alpha+1-p}{p}\Big)^{p}\frac{1}{n^{p-\alpha}},~~n\geq2.
\end{align*}
\end{Theorem}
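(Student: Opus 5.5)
The plan is to apply Proposition \ref{p-weightedgeneral} with $m=1$, exactly as in the proof of Theorem \ref{Ptheoremdual}, and then invoke Lemma \ref{Lemmastrictly} for the lower bound on the weight. Set $c:=\frac{\alpha+1-p}{p}$; since $p-1<\alpha\le p$ we have $0<c\le \frac1p$. I would take
\begin{align*}
V_n=(n-1)^{\alpha}\ (n\ge 1),\qquad \mu_0=0,\quad \mu_n=\frac{\Gamma(n)}{\Gamma(n+c)}\ (n\ge1).
\end{align*}
In particular $V_1=0$. For $\alpha=p$ this reduces to the choice made in Theorem \ref{Ptheoremdual}.

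First I check the hypotheses of Proposition \ref{p-weightedgeneral}. Clearly $V_n\ge0$ and $\mu_n>0$ for $n\ge1$. Using $\Gamma(x+1)=x\Gamma(x)$, for $n\ge2$ one gets
\begin{align*}
\mu_{n-1}-\mu_n=\frac{\Gamma(n-1)}{\Gamma(n+c)}\big[(n-1+c)-(n-1)\big]=\frac{c\,\Gamma(n-1)}{\Gamma(n+c)}>0.
\end{align*}
So $\mu$ is strictly decreasing on $\mathbb N$ because $c>0$. The proposition with $m=1$ then gives the inequality (\ref{GDIMITheorem}) with weight
\begin{align*}
\frac{\operatorname{div}_1\big(V(-\nabla_1\mu)^{p-1}\big)_n}{\mu_n^{p-1}}
=V_{n+1}\Big(\frac{\mu_n-\mu_{n+1}}{\mu_n}\Big)^{p-1}-V_n\Big(\frac{\mu_{n-1}-\mu_n}{\mu_n}\Big)^{p-1},\qquad n\ge2.
\end{align*}

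The next step is to evaluate the two ratios with the same Gamma identity:
\begin{align*}
\frac{\mu_n-\mu_{n+1}}{\mu_n}=\frac{c}{n+c}=\frac{\alpha+1-p}{pn+\alpha+1-p},\qquad
\frac{\mu_{n-1}-\mu_n}{\mu_n}=\frac{c}{n-1}=\frac{\alpha+1-p}{pn-p}.
\end{align*}
Substituting $V_{n+1}=n^\alpha$ and $V_n=(n-1)^\alpha$, the weight is exactly $\hat{w(\alpha)}_n$ as stated. The strict lower bound $\hat{w(\alpha)}_n>\big(\frac{\alpha+1-p}{p}\big)^p n^{\alpha-p}$ is then precisely Lemma \ref{Lemmastrictly}.

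The only place needing care is the index bookkeeping in Proposition \ref{p-weightedgeneral}. With $A_0=A_1=0$, the boundary term at $n=1$ is absent, so the sum runs over $n\ge2$ and matches the left-hand side $\sum_{n\ge2}(n-1)^\alpha|\nabla A_n|^p$. The rest is the routine Gamma-function simplification above. I expect no real obstacle, since the convexity argument needed for the lower bound is already carried out in Lemma \ref{Lemmastrictly}.
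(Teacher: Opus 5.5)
Your proposal is correct and follows the paper's proof essentially verbatim. It uses the same $V_n=(n-1)^{\alpha}$, the same Gamma-quotient $\mu_n=\Gamma(n)/\Gamma(n+c)$ with $c=\frac{\alpha+1-p}{p}$, Proposition~\ref{p-weightedgeneral} with $m=1$, and Lemma~\ref{Lemmastrictly} for the strict lower bound. You also carry out the ratio computations $\frac{\mu_n-\mu_{n+1}}{\mu_n}=\frac{c}{n+c}$ and $\frac{\mu_{n-1}-\mu_n}{\mu_n}=\frac{c}{n-1}$, which the paper leaves implicit. Your formula $\mu_{n-1}-\mu_n=c\,\Gamma(n-1)/\Gamma(n+c)$ is the correct one; the paper's displayed expression has $\Gamma(n-1+c)$ in place of $\Gamma(n+c)$, a slip that does not affect positivity.
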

\begin{proof}
Let $p>1$ and  $p-1<\alpha\leq p$, then we now define the sequences as follows:\\
 \begin{align}
V_{n} &=
 \left\{
\begin{array}{lll}
    0 & \quad \mbox{if~~} n=1,\\
    %\frac {2\log N-\sqrt\lambda_{n}\log n}{\log N} & \quad \mbox{if~~} N\leq n\leq N^{2}\\
    (n-1)^{\alpha} & \quad \mbox{if~~} n\geq2,
\end{array}
\right.
&
\mu_{n} &=
\left\{
\begin{array}{lll}
    0 & \quad \mbox{if~~} n=0,\\
    %\frac {2\log N-\sqrt\lambda_{n}\log n}{\log N} & \quad \mbox{if~~} N\leq n\leq N^{2}\\
   \frac{\Gamma{ (n)}}{\Gamma \big(n+\frac{1+\alpha-p}{p}\big)}  & \quad \mbox{if~~} n\geq1,
\end{array}
\right.
\end{align}where $\Gamma$ is the Euler gamma function. Since $\mu_{n-1}-\mu_{n}=\frac{(1+\alpha-p)\Gamma(n)}{p(n-1)\Gamma(n-1+\frac{1+\alpha-p}{p})}>0$ for all $n\geq2$, hence $\mu_{n}$ is decreasing. Hence, Proposition \ref{p-weightedgeneral} can be applied to the above choice of $\mu_n$ and $V_{n}$ with $m=1$.  Consequently, we obtain the desired improved weighted dual $p$- Hardy inequality. The inequality $\hat{w(\alpha)}_{n}>\Big(\frac{\alpha+1-p}{p}\Big)^{p}\frac{1}{n^{p-\alpha}}$ follows directly from Lemma \ref{Lemmastrictly}. This completes the proof of the theorem.
\end{proof}

\section{ Discrete $p$-Birman Inequality with power weight}
In this section, we prove a weighted discrete $p$-Birman inequality and, as a consequence, obtain a weighted discrete Rellich inequality as a special case.
We first recall the notation required for the proof of the weighted $p$-Birman inequality. For $l\in\mathbb N$, we recall the Pochhammer symbol
$(a)_{l}:=\prod_{j=0}^{l-1}(a+j)$. In addition, we shall use the forward shift operator $S$, acting on a sequence $u_{n}$ as
$Su_{n}=u_{n+1}$, and discrete gradient operator $\nabla u_{n}=u_{n}-u_{n-1}$, $n\in\mathbb{N}_0$. We shall employ these notations in the proof of the following theorem \ref{WBI}, which establishes  $p$-Birman inequality with power weights.
\begin{Theorem}{\label{WBI}}
Let $p>1$, $\alpha\in[0,p-1)$, and $A\in C_c(\mathbb{N}_0)$ satisfy  $A_{n}=0$ for $n<l$, where $l\in\mathbb{N}$, then for all $n\geq l$, $n\in\mathbb{N}_0$, the following inequality holds.
\begin{align}{\label{WBM}}
&\displaystyle\sum_{n=l}^{\infty}(n-l+1)^{\alpha}|\nabla^{l} A_{n}|^{p}\geq\Big(\frac{p-\alpha-1}{p}\Big)_{l}^{p}\displaystyle\sum_{n=l}^{\infty}n^{\alpha-lp}|A_{n}|^{p}.
\end{align}
\end{Theorem}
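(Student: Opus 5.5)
We argue by iterating a one–step weighted Hardy inequality $l$ times, peeling off one difference at a time. The constant multiplies up to the Pochhammer product. For $j=1,\dots,l$ put $B^{(j)}_n:=\nabla^{l-j}A_n$. Since $A_n=0$ for $n<l$, every $B^{(j)}$ vanishes for $n<l$, and $\nabla B^{(j)}=B^{(j-1)}$ with $B^{(0)}=\nabla^lA$. We set $\beta_j:=\alpha-(j-1)p$, so that $\beta_1=\alpha\in[0,p-1)$ and $\beta_j<0$ for $j\ge2$. The corresponding Hardy constants are
\[
\frac{p-\beta_j-1}{p}=\frac{p-\alpha-1}{p}+(j-1),
\]
and their product over $j=1,\dots,l$ is exactly $\big(\frac{p-\alpha-1}{p}\big)_l$. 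So it suffices to prove, at step $j$, an inequality of the form
\[
\sum_n W^{(j-1)}_n|\nabla B^{(j)}_n|^p\ \ge\ \Big(\frac{p-\beta_j-1}{p}\Big)^p\sum_n W^{(j)}_n|B^{(j)}_n|^p ,
\qquad W^{(0)}_n=(n-l+1)^{\alpha},
\]
and, at the end, to check that $W^{(l)}_n\ge n^{\alpha-lp}$.

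The naive route applies Copson's inequality (\ref{COPIPAR}) at step $1$ to $k\mapsto B^{(1)}_{k+l-1}$, which gives $W^{(1)}_n=(n-l+1)^{\alpha-p}$. For $j\ge2$ it then applies the Štampach–Waclawek inequality (\ref{ECOPIPAR}), which holds for negative exponents, to the shifted sequence $k\mapsto B^{(j)}_{k+l-j}$. Between consecutive steps it uses that a negative power decreases when the base increases. Tracking the shifts, this produces $W^{(l)}_n=(n+1)^{\alpha-lp}$. That is slightly too weak, since we need $n^{\alpha-lp}$. So the naive chain loses exactly in the shift, and I will not use it as is.

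To avoid this loss I will run each step directly through the general estimate (\ref{SW26}), i.e.\ Proposition \ref{p-weightedgeneral} in its increasing-$\mu$ version with $m=1$, with gamma-ratio weights instead of pure powers. Concretely, I take
\[
V_n=\frac{\Gamma(n+a_{j})}{\Gamma(n+b_{j})},\qquad \mu_n=\frac{\Gamma(n+c_j)}{\Gamma(n+d_j)} .
\]
These behave like $n^{\beta_j}$ and $n^{(p-\beta_j-1)/p}$ respectively, with $\mu$ nondecreasing and $\mu$ vanishing at the appropriate shifted origin. The parameters are chosen so that $\nabla\mu_n$ is again a gamma ratio, making $V(\nabla\mu)^{p-1}$ as explicit as possible. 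From $-\operatorname{div}(V(\nabla\mu)^{p-1})/\mu^{p-1}$ one then bounds the resulting weight from below by $\big(\frac{p-\beta_j-1}{p}\big)^p$ times a gamma ratio $\Gamma(n+a_{j+1})/\Gamma(n+b_{j+1})$. This is done by a convexity argument in $x=1/n$ of the same kind as in Lemma \ref{Lemmastrictly}, and the output ratio serves as the next step's $V$. At the first step one also needs $(n-l+1)^{\alpha}\ge V_n$, which for $\alpha\ge0$ follows from Gautschi/Wendel's inequality $\Gamma(x+s)/\Gamma(x)\le x^{s}$ for $s\in[0,1]$, iterated for larger $s$.

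The main obstacle is the final comparison $W^{(l)}_n=\Gamma(n+a_{l+1})/\Gamma(n+b_{l+1})\ge n^{\alpha-lp}$. This requires choosing all the shifts $a_j,b_j,c_j,d_j$ so that they telescope correctly through the $l$ steps and so that the final ratio has $a_{l+1}-b_{l+1}=\alpha-lp$ with a shift favourable for Wendel's inequality. The pointwise lower bounds for the intermediate weights must also hold already from the first admissible index $n=l$. I would first try $\mu$ at step $j$ with $c_j=-(l-j)$ (vanishing at the shifted origin $n=l-j$) and $d_j=c_j-(p-\beta_j-1)/p$. I would then verify the step-wise weight bound numerically for small $l$ before proving it by the convexity argument.
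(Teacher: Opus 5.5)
\textbf{There is a genuine gap.} As written, your proposal is a plan, not a proof. The route through (\ref{SW26}) with gamma-ratio weights is never carried out:
\begin{itemize}
\item the shifts $a_j,b_j,c_j,d_j$ are never fixed;
\item the lower bound of $-\operatorname{div}(V(\nabla\mu)^{p-1})_n/\mu_n^{p-1}$ by $\big(\frac{p-\beta_j-1}{p}\big)^p$ times the next gamma ratio is not proved (it would need the sharp constant, for $\beta_j<0$, from the first admissible index on);
\item the final comparison with $n^{\alpha-lp}$ is explicitly left open, to be checked numerically.
\end{itemize}
Nothing guarantees that a consistent choice of parameters exists, so the argument has no verified core.

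The detour is also unnecessary, because your diagnosis of the naive chain is a bookkeeping slip. You note that $B^{(j)}_n=0$ for all $n<l$. But at step $j\ge2$ you apply (\ref{ECOPIPAR}) to $k\mapsto B^{(j)}_{k+l-j}$. Between steps $1$ and $2$ this forces the comparison $(n-l+1)^{\alpha-p}\ge(n-l+2)^{\alpha-p}$, and that is exactly where the unit of shift is lost.

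Use the shift $l-j+1$ instead. The sequence $D_k:=B^{(j)}_{k+l-j+1}$ satisfies $D_0=B^{(j)}_{l-j+1}=0$, since $l-j+1<l$. Applying (\ref{ECOPIPAR}) with exponent $\beta_j$ and writing $n=k+l-j+1$ gives
\[
\sum_{n\ge l}(n-l+j-1)^{\beta_j}\,|B^{(j-1)}_n|^p\ \ge\ \Big(\frac{p-\beta_j-1}{p}\Big)^p\sum_{n\ge l}(n-l+j)^{\beta_{j+1}}\,|B^{(j)}_n|^p ,
\]
where the terms with $n<l$ vanish. Set $W^{(j)}_n=(n-l+j)^{\alpha-jp}$ for $j\ge1$. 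Then:
\begin{itemize}
\item Copson's output $W^{(1)}$ is exactly the left weight of step $2$.
\item The factor $(k+1)^{\beta_j-p}$ produced by (\ref{ECOPIPAR}) is exactly the left weight of step $j+1$.
\item No monotonicity is ever used, and at the end $W^{(l)}_n=n^{\alpha-lp}$ with constant $\big(\frac{p-\alpha-1}{p}\big)_l^p$.
\end{itemize}

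This is precisely the paper's proof, organized as an induction on $l$. The case $l-1$ is applied to $\nabla SA$. Then (\ref{ECOPIPAR}) is applied to $SA$, which is admissible since $(SA)_0=A_1=0$. Finally $(n+1)^{\alpha-lp}|A_{n+1}|^p$ re-indexes to $n^{\alpha-lp}|A_n|^p$.
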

\begin{proof}
We prove the theorem by induction on $l$, $l\in\mathbb{N}$. For case $l=1$, the inequality (\ref{WBM}) is the the power-type weighted Hardy inequality (\ref{COPIPAR}). We now turn to the case $l=2$. Let us assume that $A_{1}=A_{0}=0$. Then we have,
\begin{align*}
\displaystyle\sum_{n=2}^{\infty}(n-1)^{\alpha}|\nabla^{2} A_{n}|^{p}=&\displaystyle\sum_{n=1}^{\infty}n^{\alpha}|\nabla(\nabla A_{n+1})|^{p}\\
%=&\displaystyle\sum_{n=1}^{\infty}n^{\alpha}|\nabla(\nabla S A_{n})|^{p}\\
\geq&\Big(\frac{p-\alpha-1}{p}\Big)^{p}\displaystyle\sum_{n=1}^{\infty}n^{\alpha-p}|\nabla A_{n+1}|^{p}~~[~ by~using~inequality~(\ref{COPIPAR})~ ]\\
=&\Big(\frac{p-\alpha-1}{p}\Big)^{p}\displaystyle\sum_{n=1}^{\infty}n^{\hat{\alpha}}|\nabla S A_{n})|^{p}\\
&[Since,~ \hat{\alpha}=(\alpha-p)< 0,~we~apply~~ inequality~ (\ref{ECOPIPAR})]\\
\geq&\Big(\frac{p-\alpha-1}{p}\Big)^{p}\Big(\frac{p-\hat{\alpha}-1}{p}\Big)^{p}\displaystyle\sum_{n=1}^{\infty}(n+1)^{\hat{\alpha}}|SA_{n}|^{p}\\
=&\Big(\frac{p-1-\alpha}{p}\Big)_{2}^{p}\displaystyle\sum_{n=2}^{\infty}n^{\alpha-2p}| A_{n}|^{p}
\end{align*}Suppose the result holds for $l-1$. We shall establish its validity for $l$, assuming that $A_{n}=0$ for every $n<l$. Applying the induction hypothesis, we get
\begin{align*}
\displaystyle\sum_{n=l}^{\infty}(n-l+1)^{\alpha}|\nabla^{l} A_{n}|^{p}=&\displaystyle\sum_{n=l-1}^{\infty}(n-l+2)^{\alpha}|\nabla^{l-1}(\nabla A_{n+1})|^{p}\\
&\geq\Big(\frac{p-\alpha-1}{p}\Big)_{l-1}^{p}\displaystyle\sum_{n=l-1}^{\infty}n^{\alpha-(l-1)p}|\nabla A_{n+1}|^{p}\\
&=\Big(\frac{p-\alpha-1}{p}\Big)_{l-1}^{p}\displaystyle\sum_{n=1}^{\infty}n^{\tilde{\alpha}}|\nabla SA_{n}|^{p}\\
&[~Since~, \tilde{\alpha}=(\alpha-(l-1)p)< 0,~we~apply~~ inequality~ (\ref{ECOPIPAR})~].\\
&\geq\Big(\frac{p-\alpha-1}{p}\Big)_{l-1}^{p}\Big(\frac{p-\tilde{\alpha}-1}{p}\Big)^{p}\displaystyle\sum_{n=1}^{\infty}(n+1)^{\tilde{\alpha}-p}| SA_{n}|^{p}\\
&=\Big(\frac{p-\alpha-1}{p}\Big)_{l}^{p}\displaystyle\sum_{n=l}^{\infty}n^{\alpha-lp}| A_{n}|^{p}
\end{align*}
Therefore, the desired inequality holds for $l$. By induction, the theorem follows.
\end{proof}

\begin{Corollary}
If we put $p=2$ and $l=2$ in the above inequality (\ref{WBM}), then for $\alpha\in[0,2)$, we obtain the following Rellich inequality with power weights.
\begin{align}{\label{WRellich}}
&\displaystyle\sum_{n=1}^{\infty}n^{\alpha}|\Delta A_{n}|^{2}\geq\frac{(\alpha^{2}-4\alpha+3)^{2}}{16}\displaystyle\sum_{n=1}^{\infty}\frac{|A_{n}|^{2}}{n^{4}}n^{\alpha},~~A_{0}=A_{1}=0,
\end{align} Here, the discrete Dirichlet Laplacian $\Delta$ on $A_n$, introduced in \cite{BGDKFS}, is defined by
\begin{align*}
\Delta A_{n} &= \left\{
\begin{array}{lll}
    2A_{0}-A_{1} & \quad \mbox{if~~} n=0,\\
    2A_{n}-A_{n-1}-A_{n+1} & \quad \mbox{if~~} n\in\mathbb{N}.
\end{array}\right.
\end{align*} For $\alpha=0$ in (\ref{WRellich}), we get discrete Rellich inequality \cite{BGDKFS}.
\end{Corollary}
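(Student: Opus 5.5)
The plan is to specialize Theorem \ref{WBI} to $p=2$, $l=2$ and then rewrite both sides in the notation of the corollary. There is one caveat. Theorem \ref{WBI} is stated only for $\alpha\in[0,p-1)=[0,1)$ when $p=2$, while the corollary claims $\alpha\in[0,2)$. So I would not quote the theorem as a black box. Instead I would rerun its $l=2$ argument for $p=2$, checking that each step survives on the larger range.

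\textbf{Step 1: the first Hardy step.} With $A_0=A_1=0$,
\[
\sum_{n=2}^{\infty}(n-1)^{\alpha}|\nabla^{2}A_n|^{2}=\sum_{n=1}^{\infty}n^{\alpha}|\nabla(\nabla A_{n+1})|^{2}.
\]
To this I apply the weighted Hardy inequality (\ref{COPIPAR}) for $p=2$, with constant $\big(\frac{1-\alpha}{2}\big)^{2}$. For $\alpha\in[0,1)$ this is exactly (\ref{COPIPAR}). For $\alpha\in[1,2)$ I invoke the remark in the introduction that the $p=2$ case of (\ref{COPIPAR}) remains valid for all $\alpha\ge 0$, as established in \cite{HUANGYE}. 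The applicability is tied to \cite{HUANGYE}, because the $(p-1-\alpha)$ factor is no longer positive there.

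\textbf{Step 2: the second Hardy step.} Here the exponent is $\hat\alpha=\alpha-2<0$, so (\ref{ECOPIPAR}) applies with no restriction on $\alpha$. It gives the factor $\big(\frac{1-\hat\alpha}{2}\big)^{2}=\big(\frac{3-\alpha}{2}\big)^{2}$ and the weight $(n+1)^{\alpha-4}$ on $|A_{n+1}|^{2}$. After shifting the index, the right-hand side becomes
\[
\Big(\frac{1-\alpha}{2}\Big)^{2}\Big(\frac{3-\alpha}{2}\Big)^{2}\sum_{n=2}^{\infty}n^{\alpha-4}|A_n|^{2}.
\]
Its constant equals $\big(\frac{1-\alpha}{2}\big)_{2}^{2}=\frac{(\alpha^{2}-4\alpha+3)^{2}}{16}$. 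Since $A_1=0$, the sum may start at $n=1$, which matches $\sum_{n\ge1}|A_n|^{2}n^{\alpha}/n^{4}$.

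\textbf{Step 3: identify the left side with the Laplacian.} For $n\ge 2$,
\[
\nabla^{2}A_n=A_n-2A_{n-1}+A_{n-2}=-\Delta A_{n-1}.
\]
Hence
\[
\sum_{n\ge2}(n-1)^{\alpha}|\nabla^{2}A_n|^{2}=\sum_{m\ge1}m^{\alpha}|\Delta A_m|^{2}.
\]
The $n=0$ branch of $\Delta$ never enters. Setting $\alpha=0$ then gives the constant $9/16$, which recovers the discrete Rellich inequality of \cite{BGDKFS}.

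The only real obstacle is the range $\alpha\in[1,2)$ in Step 1. The general-$p$ proof of Theorem \ref{WBI} does not cover it, so that case depends on the $p=2$ extension of (\ref{COPIPAR}) cited from \cite{HUANGYE}. Everything else is bookkeeping of indices and the Pochhammer product.
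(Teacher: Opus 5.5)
Your proof is correct and follows the paper's route. The paper obtains the corollary by setting $p=2$, $l=2$ in Theorem \ref{WBI}: it applies (\ref{COPIPAR}) to $\nabla A_{n+1}$, then (\ref{ECOPIPAR}) with exponent $\alpha-2$, and rewrites via $\nabla^{2}A_n=-\Delta A_{n-1}$.

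The real difference is that you caught what this one-line specialization overlooks. Theorem \ref{WBI} is stated only for $\alpha\in[0,p-1)=[0,1)$, and its first step uses (\ref{COPIPAR}), which needs that range. So the range $[0,2)$ claimed in the corollary does not follow from it, and your separate treatment of $\alpha\in[1,2)$ is genuinely needed.

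Your patch is sound. At $\alpha=1$ the constant vanishes. For $\alpha\in(1,2)$ you do not even need \cite{HUANGYE}. Put $B_n=\nabla A_{n+1}$ and $a_k=B_k-B_{k+1}$, so that $B_n=\sum_{k\ge n}a_k$. Copson's weighted dual inequality $\sum_{n\ge1}n^{\alpha-2}\big(\sum_{k\ge n}|a_k|\big)^{2}\le\frac{4}{(\alpha-1)^{2}}\sum_{n\ge1}n^{\alpha}|a_n|^{2}$ holds for $\alpha\in(1,2]$; it is the $p=2$ case of (\ref{DHI}) in its original form, without the constraint $A_1=0$. It gives $\frac{(\alpha-1)^{2}}{4}\sum_{n\ge1}n^{\alpha-2}|B_n|^{2}\le\sum_{k\ge2}(k-1)^{\alpha}|\nabla B_k|^{2}\le\sum_{k\ge1}k^{\alpha}|\nabla B_k|^{2}$, which is exactly your Step 1.

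You also correctly carry the weight $(n+1)^{\alpha-4}$ from (\ref{ECOPIPAR}). The paper's proof of Theorem \ref{WBI} writes $(n+1)^{\hat\alpha}$ at that point, which is a typo.
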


\section{ A Multivariate discrete p-dual Hardy inequality}
In this section, we establish the $p$-dual Hardy inequality (\ref{VHARDYI}) in the multivariable setting.
The main result is stated in the following theorem, where the sharpness of the constant is also established.
\begin{thm}{\label{r-thm}}
Let $\{a_{m_{1}m_{2}\cdots m_{k}}\}$ be a $k$-fold sequence of complex numbers. Then
\begin{align}{\label{r-I}}
&\displaystyle\sum_{m_{1}=1}^{\infty}\displaystyle\sum_{m_{2}=1}^{\infty}\cdots\displaystyle\sum_{m_{k}=1}^{\infty}\Big|\displaystyle\sum_{i_{1}=m_{1}}^{\infty}\displaystyle\sum_{i_{2}=m_{2}}^{\infty}\cdots\displaystyle\sum_{i_{k}=m_{k}}^{\infty}a_{i_{1}i_{2}\cdots i_{k}}\Big|^{p}\nonumber\\
&\leq(p^{p})^{k}\displaystyle\sum_{m_{1}=1}^{\infty}\displaystyle\sum_{m_{2}=1}^{\infty}\cdots\displaystyle\sum_{m_{k}=1}^{\infty}\Big(m_{1}m_{2}\cdots m_{k}\Big)^{p}\mid a_{m_{1}m_{2}\cdots m_{k}}\mid^{p},
\end{align}where the associated constant term $(p^{p})^{k}$ is sharp.
\end{thm}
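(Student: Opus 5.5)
The plan is to iterate the one-variable dual $p$-Hardy inequality (\ref{VHARDYI}) coordinate by coordinate, in the spirit of the multivariable result (\ref{r fold inequality}) of Salem et al. Sharpness will follow from product test sequences.

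\textbf{Case $k=2$.} Set $b_{i_1 m_2}=\sum_{i_2\ge m_2}a_{i_1i_2}$. Then for fixed $m_1,m_2$,
\begin{align*}
\sum_{i_1\ge m_1}\sum_{i_2\ge m_2}a_{i_1i_2}=\sum_{i_1\ge m_1}b_{i_1m_2}.
\end{align*}
Fix $m_2$ and apply (\ref{VHARDYI}), in the form $\sum_n|\sum_{k\ge n}a_k|^p\le p^p\sum_n n^p|a_n|^p$, to the sequence $i_1\mapsto b_{i_1m_2}$. This gives
\begin{align*}
\sum_{m_1}\Big|\sum_{i_1\ge m_1}b_{i_1m_2}\Big|^p\le p^p\sum_{m_1}m_1^p|b_{m_1m_2}|^p.
\end{align*}
Next sum over $m_2$ and interchange the two sums, which is legitimate since all terms are nonnegative. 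For each fixed $m_1$, apply (\ref{VHARDYI}) again to $i_2\mapsto a_{m_1i_2}$:
\begin{align*}
\sum_{m_2}|b_{m_1m_2}|^p\le p^p\sum_{m_2}m_2^p|a_{m_1m_2}|^p.
\end{align*}
Multiplying by $m_1^p$ and summing over $m_1$ yields (\ref{r-I}) with constant $p^{2p}$.

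One point needs care. (\ref{VHARDYI}) is stated for nonnegative sequences, but its equivalent difference form (\ref{VHARDYIP2}) holds for complex sequences, so both applications are valid. Also, whenever the right-hand side of (\ref{r-I}) is finite, Hölder shows the tail sums converge absolutely, so $b$ is well defined.

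\textbf{General $k$.} Proceed by induction on $k$. Write the $k$-fold tail sum as a one-variable tail sum in $i_1$ of the $(k-1)$-fold tail sums
\begin{align*}
c_{i_1m_2\cdots m_k}=\sum_{i_2\ge m_2}\cdots\sum_{i_k\ge m_k}a_{i_1i_2\cdots i_k}.
\end{align*}
First apply (\ref{VHARDYI}) in $m_1$, which produces a factor $p^p m_1^p$. Then, for each fixed $m_1$, apply the induction hypothesis to the $(k-1)$-fold sequence $a_{m_1\cdot}$, which produces a factor $p^{p(k-1)}(m_2\cdots m_k)^p$. Tonelli justifies the reordering of sums at each step.

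\textbf{Sharpness.} This is the main obstacle. Suppose a constant $C<p^{pk}$ worked. Take product sequences $a_{i_1\cdots i_k}=\prod_j x^{(j)}_{i_j}$ with each $x^{(j)}=x$ nonnegative. Both sides of (\ref{r-I}) factor into $k$-th powers of the one-variable quantities, so we would get
\begin{align*}
\Big(\sum_n\Big|\sum_{i\ge n}x_i\Big|^p\Big)^k\le C\Big(\sum_n n^p x_n^p\Big)^k.
\end{align*}
This says the one-variable inequality holds with constant $C^{1/k}<p^p$, contradicting the sharpness of $p^p$ in (\ref{VHARDYI}). The only real work is to confirm that the one-variable sharpness is attained along a family of admissible sequences. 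A standard choice is $x_n=n^{-1-1/p-\varepsilon}$ with $\varepsilon\downarrow0$: here $\sum_{i\ge n}x_i\sim n^{-1/p-\varepsilon}/(1/p+\varepsilon)$, so the ratio of the two sides tends to $p^p$. I would cite this extremal family (Hardy, Copson) rather than recompute it, and then pass it through the product structure.
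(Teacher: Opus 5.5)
Your proposal is correct and is essentially the paper's proof. Both iterate the one-variable inequality (\ref{VHARDYI}) with an interchange of nonnegative sums, and both use the product test sequence $\prod_j i_j^{-1/p-1-\varepsilon}$ for sharpness. The paper's induction step applies the $k$-fold hypothesis before the one-variable step in $m_1$, the reverse of your order, and it bounds the ratio directly via $\sum_{i\ge m}i^{-1/p-1-\varepsilon}>\int_m^\infty x^{-1/p-1-\varepsilon}\,dx$ instead of reducing to one-variable sharpness; neither difference matters.

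One small fix: the complex case of (\ref{VHARDYI}) follows at once from $\bigl|\sum_{i\ge n}a_i\bigr|\le\sum_{i\ge n}|a_i|$. That is cleaner than appealing to (\ref{VHARDYIP2}), whose hypotheses ($A_0=A_1=0$, finite support) do not directly fit your sequences.
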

\begin{proof}
We establish the theorem by induction in $k$, $k\in\mathbb{N}$. For $k=1$, the inequality (\ref{r-I}) gives classical discrete dual $p$-Hardy inequality (\ref{VHARDYI}). To established the inequality (\ref{r-I}) for $k=2$, we define $A_{i_{1}m_{2}}=\displaystyle\sum_{i_{2}=m_{2}}^{\infty}a_{i_{1}i_{2}}$ and begin with
\begin{align*}
\displaystyle\sum_{m_{1}=1}^{\infty}\displaystyle\sum_{m_{2}=1}^{\infty}\Big|\displaystyle\sum_{i_{1}=m_{1}}^{\infty}\displaystyle\sum_{i_{2}=m_{2}}^{\infty}a_{i_{1}i_{2}}\Big|^{p}
=&\displaystyle\sum_{m_{1}=1}^{\infty}\displaystyle\sum_{m_{2}=1}^{\infty}\Big|\displaystyle\sum_{i_{1}=m_{1}}^{\infty}A_{i_{1}m_{2}}\Big|^{p}\leq p^{p}\displaystyle\sum_{m_{2}=1}^{\infty}\displaystyle\sum_{m_{1}=1}^{\infty}m_{1}^{p}|A_{m_{1}m_{2}}|,
%=&\displaystyle\sum_{n=1}^{\infty}\displaystyle\sum_{m_{1}=1}^{\infty}\Big|\displaystyle\sum_{i=m_{1}}^{\infty}A_{im_{2}}\Big|^{p}\leq p^{p}\displaystyle\sum_{m_{2}=1}^{\infty}\displaystyle\sum_{m_{1}=1}^{\infty}m_{1}^{p}|A_{m_{1}m_{2}}|
\end{align*}where the last inequality is an immediate consequence $\displaystyle\sum_{m_{1}=1}^{\infty}\Big|\displaystyle\sum_{i_{1}=m_{1}}^{\infty}A_{i_{1}m_{2}}\Big|^{p}\leq p^{p}\displaystyle\sum_{m_{1}=1}^{\infty}m_{1}^{p}|A_{m_{1}m_{2}}|^{p}$.\\Hence, we have
\begin{align*}
\displaystyle\sum_{m_{1}=1}^{\infty}\displaystyle\sum_{m_{2}=1}^{\infty}\Big|\displaystyle\sum_{i=m_{1}}^{\infty}\displaystyle\sum_{i_{2}=m_{2}}^{\infty}a_{i_{1}i_{2}}\Big|^{p}\leq& p^{p}\displaystyle\sum_{m_{2}=1}^{\infty}\displaystyle\sum_{m_{1}=1}^{\infty}m_{1}^{p}|\displaystyle\sum_{i_{2}=m_{2}}^{\infty}a_{m_{1}i_{2}}|^{p}\\
=&p^{p}\displaystyle\sum_{m_{1}=1}^{\infty}m_{1}^{p}\Big(\displaystyle\sum_{m_{2}=1}^{\infty}|\displaystyle\sum_{i_{2}=m_{2}}^{\infty}a_{m_{1}i_{2}}|^{p}\Big)\\
\leq& (p^{p})^{2}\displaystyle\sum_{m_{1}=1}^{\infty}m_{1}^{p}\displaystyle\sum_{m_{2}=1}^{\infty}m_{2}^{p}|a_{m_{1}m_{2}}|^{p}\\
=&(p^{p})^{2}\displaystyle\sum_{m_{1}=1}^{\infty}\displaystyle\sum_{m_{2}=1}^{\infty}(m_{1}m_{2})^{p}|a_{m_{1}m_{2}}|^{p}
\end{align*}
and therefore
\begin{align*}
&\displaystyle\sum_{m_{1}=1}^{\infty}\displaystyle\sum_{m_{2}=1}^{\infty}\Big|\displaystyle\sum_{i=m_{1}}^{\infty}\displaystyle\sum_{j=m_{2}}^{\infty}a_{ij}\Big|^{p}
\leq (p^{p})^{2}\displaystyle\sum_{m_{1}=1}^{\infty}\displaystyle\sum_{m_{2}=1}^{\infty}(m_{1}m_{2})^{p}|a_{m_{1}m_{2}}|^{p}.
\end{align*} Suppose that the inequality (\ref{r-I}) holds for $k$-fold series. We have to show it is true for $(k+1)$-fold series. We introduce
\begin{align}{\label{D define}}
&R_{i_{2}\ldots i_{k}i_{k+1}}=\displaystyle\sum_{i_{1}=m_{1}}^{\infty}a_{i_{1}i_{2}\ldots i_{k}i_{k+1}}
\end{align}and
Using (\ref{D define}), we have
\begin{align*}
&\displaystyle\sum_{m_{1}=1}^{\infty}\displaystyle\sum_{m_{2}=1}^{\infty}\ldots\displaystyle\sum_{m_{k}=1}^{\infty}\displaystyle\sum_{m_{k+1}=1}^{\infty}\Big|\displaystyle\sum_{i_{1}=m_{1}}^{\infty}\displaystyle\sum_{i_{2}=m_{2}}^{\infty}\ldots\displaystyle\sum_{i_{k}=m_{k}}^{\infty}\displaystyle
\sum_{i_{k+1}=m_{k+1}}^{\infty}a_{i_{1}i_{2}\ldots i_{k}i_{k+1}} \Big|^{p}\\
=&\displaystyle\sum_{m_{1}=1}^{\infty}\displaystyle\sum_{m_{2}=1}^{\infty}\ldots\displaystyle\sum_{m_{k}=1}^{\infty}\displaystyle\sum_{m_{k+1}=1}^{\infty}\Big|\displaystyle\sum_{i_{2}=m_{2}}^{\infty}\ldots\displaystyle\sum_{i_{k}=m_{k}}^{\infty}\displaystyle\sum_{i_{k+1}=m_{k+1}}^{\infty}R_{i_{2}\ldots i_{k}i_{k+1}}\Big|^{p}.
\end{align*}By applying  $k$-fold discrete $p$-dual Hardy inequality , we obtain
\begin{align*}
&\displaystyle\sum_{m_{1}=1}^{\infty}\Bigg(\displaystyle\sum_{m_{2}=1}^{\infty}\ldots\displaystyle\sum_{m_{k}=1}^{\infty}\displaystyle\sum_{m_{k+1}=1}^{\infty}\Big|\displaystyle\sum_{i_{2}=m_{2}}^{\infty}\ldots\displaystyle\sum_{i_{k}=m_{k}}^{\infty}\displaystyle\sum_{i_{k+1}=m_{k+1}}^{\infty}R_{i_{2}\ldots i_{k}i_{k+1}}\Big|^{p}\Bigg)\\
\leq&(p^{p})^{k}\displaystyle\sum_{m_{1}=1}^{\infty}\displaystyle\sum_{m_{2}=1}^{\infty}\ldots\displaystyle\sum_{m_{k}=1}^{\infty}\displaystyle\sum_{m_{k+1}=1}^{\infty}(m_{2}m_{3}\cdots m_{k+1})^{p}\mid R_{m_{2}m_{3}\ldots m_{k}m_{k+1}}\mid^{p}\\
=&(p^{p})^{k}\displaystyle\sum_{m_{1}=1}^{\infty}\displaystyle\sum_{m_{2}=1}^{\infty}\ldots\displaystyle\sum_{m_{k}=1}^{\infty}\displaystyle\sum_{m_{k+1}=1}^{\infty}(m_{2}m_{3}\cdots m_{k+1})^{p}
\mid\displaystyle\sum_{i_{1}=m_{1}}^{\infty}a_{i_{1}m_{2}\cdots m_{k}m_{k+1}}\mid^{p}(\mbox{by applying}~(\ref{D define})).
\end{align*}
It follows from Fubini's theorem that
\begin{align*}
&\displaystyle\sum_{m_{1}=1}^{\infty}\displaystyle\sum_{m_{2}=1}^{\infty}\ldots\displaystyle\sum_{m_{k}=1}^{\infty}\displaystyle\sum_{m_{k+1}=1}^{\infty}\Big|\displaystyle\sum_{i_{2}=m_{2}}^{\infty}\ldots\displaystyle\sum_{i_{k}=m_{k}}^{\infty}\displaystyle\sum_{i_{k+1}=m_{k+1}}^{\infty}R_{i_{2}\ldots i_{k}i_{k+1}}\Big|^{p}.\\
&\leq(p^{p})^{k}\displaystyle\sum_{m_{2}=1}^{\infty}\ldots\displaystyle\sum_{m_{k}=1}^{\infty}\displaystyle\sum_{m_{k+1}=1}^{\infty}(m_{2}m_{3}\cdots m_{k+1})^{p}\Bigg(\displaystyle\sum_{m_{1}=1}^{\infty}\mid\displaystyle\sum_{i_{1}=m_{1}}^{\infty}a_{i_{1}m_{2}\cdots m_{k}m_{k+1}}\mid^{p}\Bigg)
\end{align*}
\begin{align*}
&\leq(p^p)^{k+1}\displaystyle\sum_{m_{2}=1}^{\infty}\ldots\displaystyle\sum_{m_{k}=1}^{\infty}\displaystyle\sum_{m_{k+1}=1}^{\infty}(m_{2}m_{3}\cdots m_{k+1})^{p}\displaystyle\sum_{m_{1}=1}^{\infty}m_{1}^{p}\mid a_{m_{1}m_{2}\ldots m_{k}m_{k+1}}\mid^{p}\\
&(\mbox{by applying the 1-fold dual $p$-Hardy inequality})\\
=&(p^p)^{k+1}\displaystyle\sum_{m_{1}=1}^{\infty}\displaystyle\sum_{m_{2}=1}^{\infty}\ldots\displaystyle\sum_{m_{k}=1}^{\infty}\displaystyle\sum_{m_{r+1}=1}^{\infty}(m_{1}m_{2}\cdots m_{k}m_{k+1})^{p}\mid a_{m_{1}m_{2}\ldots m_{k}m_{k+1}}\mid^{p}.
\end{align*}
Thus, Theorem (\ref{r-thm}) follows for $(k+1)$-fold series. By the principle of mathematical induction, we conclude that the stated result holds for all $k\in\mathbb{N}$. Hence, the proof is complete.\\
 To prove the sharpness of the constant, we choose $a_{i_{1}i_{2}\ldots i_{k}}=\prod_{j=1}^{k}i_{j}^{-\frac{1}{p}-1-\epsilon}$, where $\epsilon$ is arbitrary small such that $\epsilon>0$. Now using this test sequence we have,
 \begin{align*}
 &\displaystyle\sum_{m_{1}=1}^{\infty}\displaystyle\sum_{m_{2}=1}^{\infty}\cdots\displaystyle\sum_{m_{k}=1}^{\infty}\Big|\displaystyle\sum_{i_{1}=m_{1}}^{\infty}\displaystyle\sum_{i_{2}=m_{2}}^{\infty}\cdots\displaystyle\sum_{i_{k}=m_{k}}^{\infty}\prod_{j=1}^{k}i_{j}^{-\frac{1}{p}-1-\epsilon}\Big|^{p}\\
 =&\prod_{j=1}^{k}\Big(\displaystyle\sum_{m_{j}=1}^{\infty}\Big|\displaystyle\sum_{i_{j}=m_{j}}^{\infty}i_{j}^{-\frac{1}{p}-1-\epsilon}\Big|^{p}\Big)\\
 >&\prod_{j=1}^{k}\Big(\displaystyle\sum_{m_{j}=1}^{\infty}\Big|\frac{m_{j}^{-\frac{1}{p}-\epsilon}}{\frac{1}{p}+\epsilon}\Big|^{p}\Big)~~[~Since,~\displaystyle\sum_{i_{j}=m_{j}}^{\infty}i_{j}^{-\frac{1}{p}-1-\epsilon}>\int_{m_{j}}^{\infty} x^{-\frac{1}{p}-1-\epsilon}~ dx~]\\
 =&\Big(\frac{p^{p}}{(1+p\epsilon)^{p}}\Big)^{k}\prod_{j=1}^{k}\Big(\displaystyle\sum_{m_{j}=1}^{\infty}\Big|m_{j}^{-1-p\epsilon}\Big|\Big)\\
 =&\Big(\frac{p^{p}}{(1+p\epsilon)^{p}}\Big)^{k}\displaystyle\sum_{m_{1}=1}^{\infty}\displaystyle\sum_{m_{2}=1}^{\infty}\cdots\displaystyle\sum_{m_{k}=1}^{\infty}\Big(m_{1}m_{2}\cdots m_{k}\Big)^{p}\Big|\prod_{j=1}^{k}m_{j}^{-\frac{1}{p}-1-\epsilon}\Big|^{p}.
 \end{align*} As $\epsilon\rightarrow0$, it follows that the constant is sharp. This completes the proof of the theorem.

\end{proof}

\end{document}